\tikzstyle{node}=[fill=black, draw=black, shape=circle, tikzit fill=black, tikzit draw=black, tikzit shape=circle, label={{[label distance=0.5cm]}}]
\tikzstyle{dots}=[fill=black, draw=black, shape=circle, inner sep=0.2ex, tikzit fill=black, tikzit draw=black]
\tikzstyle{edge}=[fill=none, draw=black, -, line width=2pt]
\newcommand{\degpoly}[2]{D(#1;#2)}
\newcommand{\be}{\begin{equation}}
\newcommand{\ee}{\end{equation}}
\newcommand{\bea}{\begin{eqnarray}}
\newcommand{\eea}{\end{eqnarray}}
\newcommand{\beas}{\begin{eqnarray*}}
\newcommand{\eeas}{\end{eqnarray*}}
\newtheorem{theorem}{Theorem}[section]
\newtheorem{proposition}[theorem]{Proposition}
\newtheorem{conjecture}[theorem]{Conjecture}
\title{On the Roots of Degree Polynomials}
\author{Jason I.\ Brown\\
jason.brown@dal.ca\\
Department of Mathematics and Statistics, Dalhousie University \\
Halifax, Nova Scotia, Canada. B3H 4R2\\
~\\
Ian C. George\\
i2george@uwaterloo.ca\\
Department of Combinatorics and Optimization, University of Waterloo\\
Waterloo, Ontario, Canada. N2L 3G1 }
\begin{document}

\date{}

\maketitle

\begin{center} \large Abstract \end{center}
The degree polynomial of a multigraph $G$ is given by $\sum _{v \in V(G)} x^{\mbox{deg}(v)}$. We investigate here properties of the roots of such polynomials. In addition to examining the roots for some families of graphs with few and many degrees, we provide some bounds on the moduli of the roots. We also propose a region that contains all roots for multigraphs of order $n$.

\vspace*{\baselineskip}
\noindent
Key words: graph, degree polynomial, root, degree root

\vspace*{\baselineskip}
\noindent
AMS subject classifications: 05C07, 05C30

\section{Introduction}
\label{sec:intro}

Let $G = (V, E)$ be a multigraph (with possibly multiple edges but no loops) of order $n$ and size $m$, that is, $|V|=n$ and $|E| = m$ (if $G$ has no multiple edges, then we refer to it as a simple graph, or simply as a graph). For any $v \in V$, let $\mbox{deg}(v) = \mbox{deg}_G(v)$ denote its degree in $G$, and let $a_{k}$ be the number of vertices of degree $k$ in $G$.  The \emph{degree polynomial} of $G$ is defined by

\begin{eqnarray*} 
\degpoly{G}{x} & = & \sum_{v \in V} x^{\mbox{deg}(v)}\\
 & = & \sum_{k=\delta}^{\Delta} a_k x^k, 
\end{eqnarray*}
where $\Delta = \Delta(G)$ and $\delta = \delta(G)$ denote the largest and smallest degrees of $G$, respectively.  The degree polynomial encodes precisely the degree sequence of a graph, and hence contains the same information.  The polynomial (unlike many other graph polynomials) can clearly be constructed in linear time. 
This graph polynomial has been previously defined independently by a number of researchers \cite{canoy2014,jafarpour2020}.  Existing literature explores how it behaves under graph operations \cite{canoy2014}, and studies the polynomials for \emph{prime graphs} and their derivatives \cite{palahang2019polynomial,palahang2020differentiability}. 

The roots (or zeros) of many graph polynomials have been well studied\,---\,see, for example, roots of chromatic polynomials \cite{dong2005chromatic}, reliability polynomials \cite{brown1992}, and independence polynomials \cite{brown2000well,brown2001bounding,brown2004location}.  A natural question that arises is\,---\,why study roots of graph polynomials? On one hand, there is (and has been) considerable interest in roots of graph polynomials in their own right. Often the calculating the graph polynomial is intractable (as is the case for chromatic \cite{dong2005chromatic}, reliability \cite{BrownC92,RoyleSokal2004} and independence polynomials \cite{BrownHN03,ChudnovskySeymour2007}) as it encodes some information about the graph that is NP-hard (or NP-complete), and in this regard investigating the roots can lead to some non-trivial insight. However, even when determining the polynomial can be carried out efficiently, such as in our case (but others as well, such as Wiener polynomials \cite{wiener}), the location and the nature of the roots show much interesting structure. 

Moreover, mathematical structures as esoteric as fractals have arisen in a variety of contexts \cite{BrownHN03, browndegagnetwoterm}. The location and nature of roots of (graph) polynomials also have import on the shape of associated coefficient sequences. For example, an old result of Newton (cf.~\cite{Stanley1989}) reveals that if the roots of a polynomial $\displaystyle{f = \sum_{i=0}^{n} a_i x^i}$ with real coefficients has all real roots, then the coefficient sequence is {\em log-concave} ($a_i^2 \geq a_{-1}a_{i+1}$ for all $i$) and hence {\em unimodal} (i.e. $a_1 \leq a_2 \leq \cdots \leq a_k \geq a_{k+1} \geq  \cdots \geq a_n$ for some $k$).  Brenti et al. \cite{BrentiRoyleWagner1994} have extended this to polynomials with roots in the sector $\{z\in\mathbb{C}\colon\ 2\pi/3<\arg(z)<4\pi/3\}$ of the complex plane.  Both these results have been utilized numerous times, including Chudnovsky and Seymour's beautiful proof of the log concavity of the independence numbers of claw-free graphs \cite{ChudnovskySeymour2007}. As well, the roots of a positive polynomial lying outside a disk in the complex plane can imply that the coefficient sequence has a binomial shape. Michelen and Sahasrabudhe \cite{michelen} proved that for certain families of polynomials with non-negative coefficients, the absence of roots in a disc with center $z = 1$ implies that the coefficients are asymptotically normal, and this result has been applied \cite{Ralaivaosaona} to the generating function for the number of subtrees of a tree.

For all of these reasons, our attention has been drawn to the roots of degree polynomials, which we refer to as {\em degree roots}.  Degree roots for regular graphs are trivial: an $r$-regular graph with $n$ vertices has degree polynomial $nx^r$, and thus all its degree roots are $0$.  Slightly less trivial are the degree roots for graphs with two degrees: a graph with $a_{\Delta}$ vertices of degree $\Delta$ and $a_{\delta}$ vertices of degree $\delta$ has degree polynomial $a_{\Delta}x^{\Delta} + a_{\delta}x^{\delta}$.  This polynomial has $\delta$ roots at $0$, and the remaining $\Delta-\delta$ roots are the $(\Delta-\delta)^{th}$ roots of $-a_{\delta}/a_{\Delta}$.  Figure \ref{fig:deg_roots_2to10} shows degree roots for graphs of small order.  

\begin{figure}
     \centering
     \begin{subfigure}
         \centering
         \includegraphics[width=0.31\textwidth]{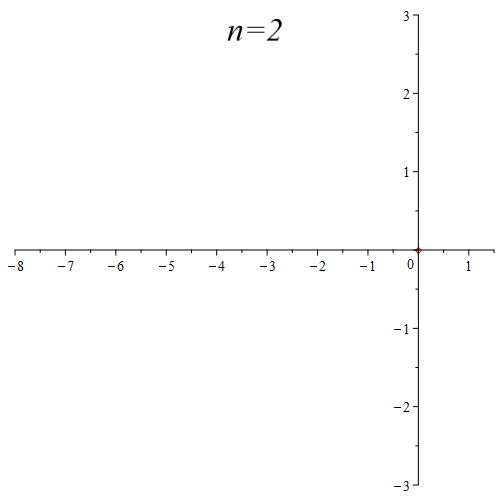}
     \end{subfigure}
     \begin{subfigure}
         \centering
         \includegraphics[width=0.31\textwidth]{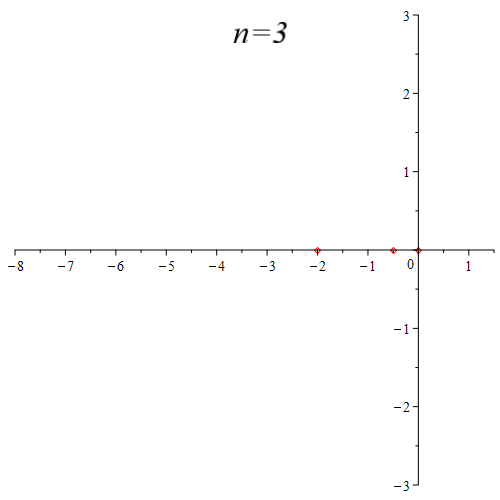}
     \end{subfigure}
     \begin{subfigure}
         \centering
         \includegraphics[width=0.31\textwidth]{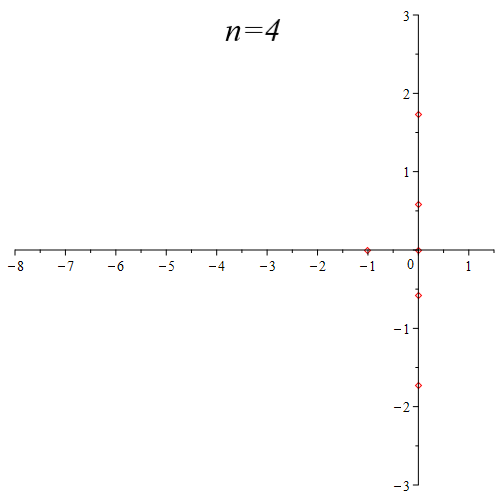}
     \end{subfigure}
     \begin{subfigure}
         \centering
         \includegraphics[width=0.31\textwidth]{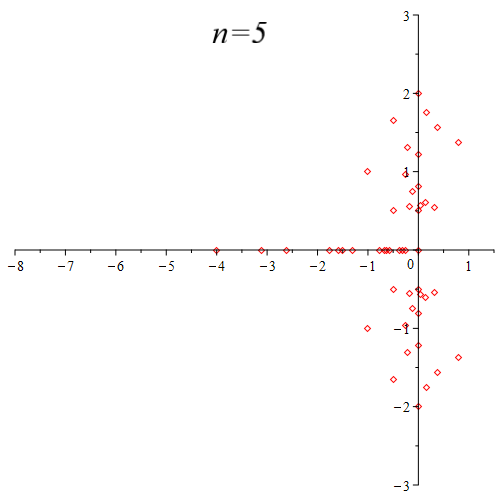}
     \end{subfigure}
     \begin{subfigure}
         \centering
         \includegraphics[width=0.31\textwidth]{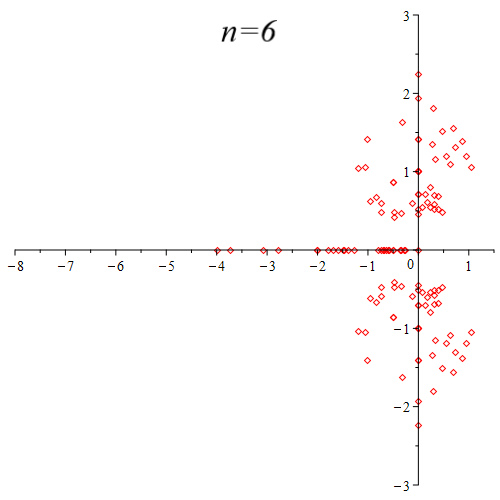}
     \end{subfigure}
     \begin{subfigure}
         \centering
         \includegraphics[width=0.31\textwidth]{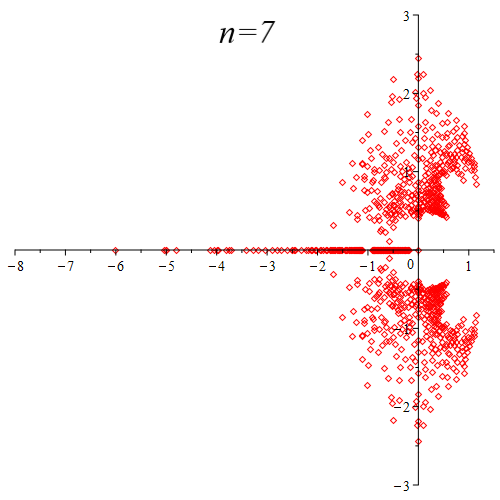}
     \end{subfigure}
     \begin{subfigure}
         \centering
         \includegraphics[width=0.31\textwidth]{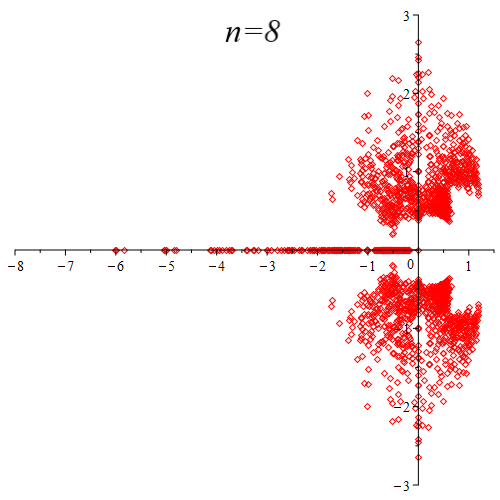}
     \end{subfigure}
     \begin{subfigure}
         \centering
         \includegraphics[width=0.31\textwidth]{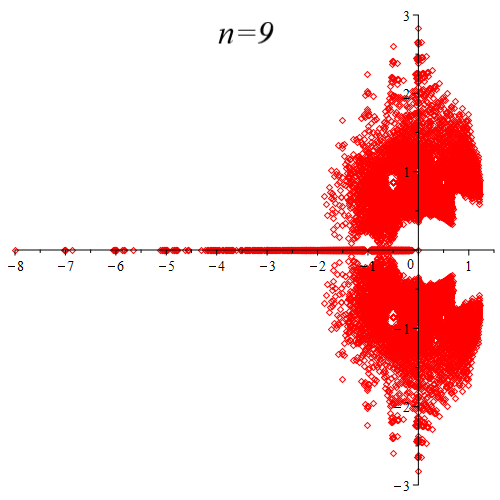}
     \end{subfigure}
     \begin{subfigure}
         \centering
         \includegraphics[width=0.31\textwidth]{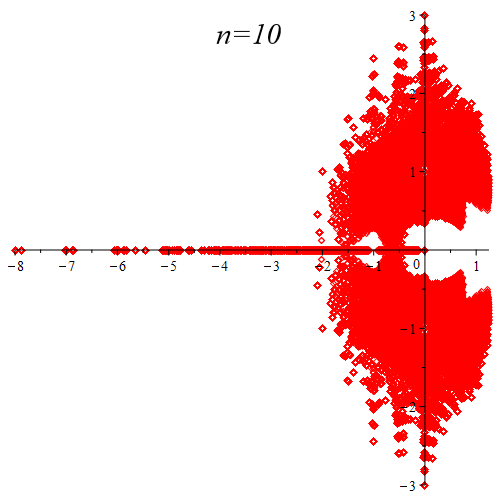}
     \end{subfigure}
        \caption{Degree roots of graphs of small order $n$.  }
        \label{fig:deg_roots_2to10}
\end{figure}

One important fact to note is that if graph $G$ of order $n$ has graph complement $\overline{G}$, then  $\degpoly{\overline{G}}{x} = x^{n-1} \cdot \degpoly{G}{1/x}$, so the set of nonzero degree roots of graphs of order $n$ is closed under inverses. In fact, the result holds for multigraphs  $M$ of order $n$ as well: Suppose that the maximum multiplicity of an edge is $\mu$. Then we can define the multigraph complement $\overline{M}$ as follows. For every pair of distinct vertices $x$ and $y$ with $k_{xy}$ edges between them in $M$, place a bundle of $\mu -k_{xy}$ parallel edges between the vertices in $\overline{M}$. Then it is easy to verify that $\degpoly{\overline{M}}{x} = x^{(n-1)\mu} \cdot \degpoly{M}{1/x}$.

Before proceeding, we observe that degree polynomials are polynomials with non-negative integer coefficients, that is, polynomials belonging to $\mathbb{Z}_{\geq 0}[x]$. Over the class of all multigraphs, the degree roots are not only contained in the set of all roots of $\mathbb{Z}_{\geq 0}[x]$, but equal to it, by the following reasoning. By a result of Hakimi \cite{hakimi1962realizability}, a sequence of non-negative integers $d_1 \geq \cdots \geq d_n$ is the degree sequence of a multigraph if and only if (i) $d_1 + \cdots + d_n$ is even, and (ii) $d_1 \leq \sum_{i=2}^n d_i$. It follows that in our terminology, a polynomial $p(x) \in \mathbb{Z}_{\geq 0}[x]$ is the degree polynomial of a multigraph if and only if (i) $p'(1)$ is even,  and (ii) $\mbox{deg}(p(x)) \leq p'(1)/2.$ It follows easily that if $f(x) \in \mathbb{Z}_{\geq 0}[x]$, then $2f(x)$ is the degree polynomial of a multigraph, and our result follows. The situation is not different even if we try to restrict to simple graphs, because if $M$ is a multigraph, then there exists a (simple) graph $G$ for which $\degpoly{M}{x} = K \cdot \degpoly{G}{x}$ for some constant $K$, and hence $G$ has precisely the same roots, including multiplicities, as $M$. (If $M$ has an edge $e$ with multiplicity $\mu_{e} > 1$, take $\mu_{e}$ disjoint copies of $M$ and let $e_1,\ldots, e_{\mu_{e}}$ be the copies of $e$ in each copy of $M$. Furthermore, let $u_i, v_i$ be the endpoints of $e_i$, $1 \leq i \leq \mu_{e}$. Consider the subgraph induced by the edges $\{e_1, \ldots, e_{\mu_{e}}\}$, where each vertex has an induced degree of $\mu_{e}$. Delete the edges in this induced subgraph and add new edges to create a $\mu_{e}$-regular induced subgraph that is simple. It is possible to do this by a folklore result since this induced subgraph has an even number of vertices and $\mu_{e} + 1 \leq 2 \mu_{e}$. If the graph is now simple, we are done. Otherwise, repeat this process for each edge of multiplicity greater than one until the graph is simple.)

A consequence of the set of degree roots being identical to the roots of polynomials with non-negative integer coefficients is that the closure of degree roots is the entire complex plane: Consider the polynomials 
$q^ax^{a+1}+p^ax \in \mathbb{Z}_{\geq 0}[x]$, where $p$ and $q$ are relatively prime positive integers and $a$ is a positive integer. It is easy to show that the set of non-trivial roots of such polynomials,
\[ \mathcal{A} = \left\{\omega_a \left(\frac{p}{q}\right) \;:\; a, p, q \in \mathbb{Z}_{\geq 1}, \; \text{gcd}(p, q)=1, \; (\omega_a)^a = -1 \right\}, \]
is dense in the complex plane, along the negative real axis, and along the imaginary axes (moreover, all such roots indeed arise as degree roots of simple graphs\,---\,see \cite{georgethesis}).

Yet this is not the end of the story, but just the beginning. What if we impose some graph-theoretic restrictions, such as restricting to certain families of multigraphs or graphs? Or restricting to graphs of fixed order, fixed size or fixed maximum degree -- that is, roots of degree polynomials with fixed sum of the coefficients, fixed evaluation of its derivative at $x = 1$, or fixed degree? For example, if we restrict to multigraphs of order $2$, then only $0$ is a degree root, while the roots of the corresponding polynomials $a +  bx^k \in \mathbb{Z}_{\geq 0}[x]$ where the sum of the coefficients $a+b = 2$ include all $n^{th}$ roots of $-1$ as well.
The results presented have been motivated by a few interesting problems, some of which arose out of similar investigations for other established graph polynomials, and others from the very nature of degree sequences.
\begin{itemize}
\item First, while the real roots of degree polynomials clearly must be negative, are they unbounded? How might they grow as a function of the order of the graph? 
\item The degree polynomials with the greatest number of nonzero terms can be characterized, as they arise from a well-characterized family of graphs. What can be said about the roots of such polynomials?
\item For graphs of order $n$, what regions bound the roots?
\end{itemize}
In terms of the latter, we conjecture a region that neatly contains the roots, whose bounding curve is unlike the usual circular disks that are presented to bound  the roots of other graph polynomials. 

We devote the next two sections to precisely such questions.

\section{Degree Roots for Graphs With Few or Many Degrees}
\label{sec:families}

In this section, we investigate the degree roots of two families of graphs, one with few degrees, the other with many.


\subsection{Complete Graphs with a Leaf}
\label{subsec:complete_with_leaf}

Of course, the degree roots of any regular multigraph are only $0$. The degree roots of multigraphs with only two degrees are almost just as trivial, being $0$ and $k^{th}$ roots of some negative integer, for some $k$. However, once we have more than two different degrees, the nature and location of the degree roots become more interesting. There is a broad range of graphs with exactly three different degree values, so we have chosen to focus on the degree roots of $CL_n$, a complete graph of order $n-1$ with a leaf attached to one vertex; clearly $\degpoly{CL_n}{x} = x^{n-1}+(n-2)x^{n-2}+x$. Figure \ref{fig:leaf_complete_all_roots} shows all roots for $\degpoly{CL_n}{x}$, $2 \leq n \leq 20$.  We can observe two things: there are roots which appear to be located near the negative integers, and there are roots which have modulus close to $1$.  Figure \ref{fig:leaf_complete_circ_roots} focuses on those roots that are within the unit circle.  It appears that the roots are approaching the entire unit circle from the inside, save for the roots at the origin and $-1$.  

\begin{figure}
    \centering
    \includegraphics[scale=0.3]{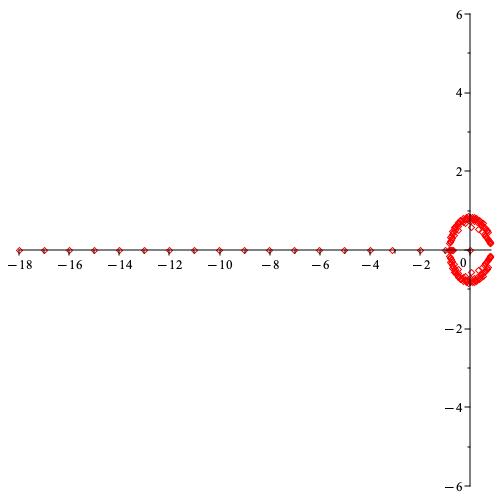}
    \caption{All roots of $\degpoly{CL_n}{x}$ for $2 \leq n \leq 20$.}
    \label{fig:leaf_complete_all_roots}
\end{figure}

\begin{figure}
    \centering
    \includegraphics[scale=0.3]{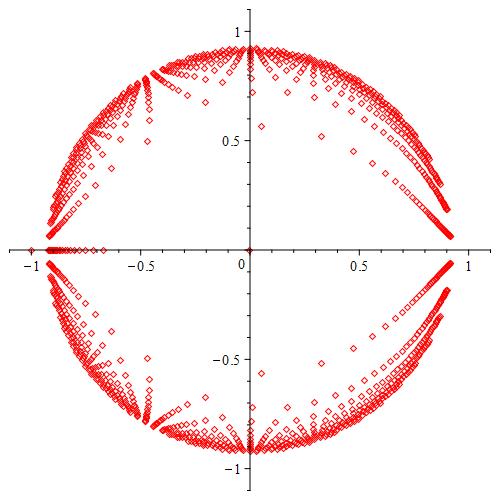}
    \caption{The roots of $\degpoly{CL_n}{x}$ for $2 \leq n \leq 50$ that are contained in the unit circle.  The roots appear to be converging outward to the unit circle as $n$ increases.}
    \label{fig:leaf_complete_circ_roots}
\end{figure}


Addressing the observation of the real roots, let us first count the negative real roots.  Consider the polynomial $\degpoly{CL_n}{-x}$:

\[ \degpoly{CL_n}{-x} = (-1)^{n-1}x^{n-1} + (-1)^{n-2}(n-2)x^{n-2} - x. \]

\noindent If $n$ is odd, the coefficients have exactly one sign change.  Thus $\degpoly{CL_n}{x}$ has exactly one negative root by Descartes' Rule of Signs (see, for example, \cite{anderson1998descartes}).  If $n$ is even, there are two sign changes in the coefficients.  Thus $\degpoly{CL_n}{x}$ has zero or two negative roots, also by the Rule of Signs.  We shall see there are in fact two negative roots for even $n$, except for $n=2$ when the degree polynomial is $\degpoly{CL_2}{x} = 2x$.  For $n \geq 4$, which is when $\degpoly{CL_n}{x}$ is a trinomial, we can locate a \emph{large} negative root within an error that vanishes as $n \to \infty$.

\begin{proposition}

Consider the graphs $CL_n$, $n \geq 4$.  For odd $n$, $\degpoly{CL_n}{x}$ has a real root in the interval $(-(n-2)-\epsilon_o(n), -(n-2))$ where

\[ \epsilon_o(n) = \frac{1}{(n-2)^{n-3}}. \]

\noindent For even $n$, $\degpoly{CL_n}{x}$ has a real root in the interval $(-(n-2), -(n-2)+\epsilon_e(n)]$, where

\[ \epsilon_e(n) = \frac{1}{(n-3)^{n-3}}. \]

\label{prop:leaf_complete_real}
\end{proposition}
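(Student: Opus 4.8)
The plan is to work with the nonzero roots directly. Writing
\[ \degpoly{CL_n}{x} = x^{n-1}+(n-2)x^{n-2}+x = x\bigl(x^{n-2}+(n-2)x^{n-3}+1\bigr) = x\cdot g(x), \]
where $g(x) := x^{n-3}\bigl(x+(n-2)\bigr)+1$, we see that every root of $g$ is a root of $\degpoly{CL_n}{x}$. The crucial feature of this factored form of $g$ is that for every real $\epsilon$,
\[ g\bigl(-(n-2)+\epsilon\bigr) = \bigl(-(n-2)+\epsilon\bigr)^{n-3}\cdot\epsilon + 1, \qquad\text{so in particular}\qquad g\bigl(-(n-2)\bigr)=1>0. \]
Thus the whole argument reduces to choosing the sign and size of a perturbation $\epsilon$ so that $g\bigl(-(n-2)+\epsilon\bigr)\le 0$, and then invoking the Intermediate Value Theorem on the segment joining $-(n-2)$ to $-(n-2)+\epsilon$; since for $n\ge 4$ that whole segment lies strictly to the left of $0$ in each case below, the resulting root is genuinely a root of $\degpoly{CL_n}{x}$.

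For odd $n$ (so $n\ge 5$ and $n-3$ is even), I would take $\epsilon=-\epsilon_o(n)=-1/(n-2)^{n-3}<0$, which moves us to the \emph{larger} negative number $-(n-2)-\epsilon_o(n)$. Because $n-3$ is even, $\bigl(-(n-2)-\epsilon_o(n)\bigr)^{n-3}=(n-2+\epsilon_o(n))^{n-3}$, and a one-line computation gives
\[ g\bigl(-(n-2)-\epsilon_o(n)\bigr) = 1-\epsilon_o(n)\,(n-2+\epsilon_o(n))^{n-3} = 1-\left(1+\frac{\epsilon_o(n)}{n-2}\right)^{\!n-3}, \]
which is strictly negative since the base exceeds $1$ and the exponent $n-3$ is at least $1$. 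Combined with $g\bigl(-(n-2)\bigr)=1$, the Intermediate Value Theorem produces a root in $\bigl(-(n-2)-\epsilon_o(n),\,-(n-2)\bigr)$ — and, since we already know from Descartes' Rule of Signs that $\degpoly{CL_n}{x}$ has exactly one negative root when $n$ is odd, this is that root.

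For even $n$ (so $n-3$ is odd), I would instead take $\epsilon=+\epsilon_e(n)=1/(n-3)^{n-3}>0$, landing at $-(n-2)+\epsilon_e(n)$. Now $\bigl(-(n-2)+\epsilon_e(n)\bigr)^{n-3}=-(n-2-\epsilon_e(n))^{n-3}$ because $n-3$ is odd, so
\[ g\bigl(-(n-2)+\epsilon_e(n)\bigr) = 1-\epsilon_e(n)\,(n-2-\epsilon_e(n))^{n-3} = 1-\left(\frac{n-2-\epsilon_e(n)}{n-3}\right)^{\!n-3}. \]
This is $\le 0$ precisely when $\epsilon_e(n)\le 1$, which holds for all even $n\ge 4$ since $(n-3)^{n-3}\ge 1$; it is strictly negative for $n\ge 6$ and equals $0$ exactly at $n=4$, where $\epsilon_e(4)=1$ and indeed $g(x)=(x+1)^2$ has its root at the right endpoint $-1$. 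Comparing once more with $g\bigl(-(n-2)\bigr)=1$, the Intermediate Value Theorem yields a root in the half-open interval $\left(-(n-2),\,-(n-2)+\epsilon_e(n)\right]$, the closed endpoint being needed only for $n=4$.

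All the displayed computations are elementary manipulations of the factored form of $g$; the only points where I would take care are (a) tracking the parity of $n-3$, since that sign flip is exactly what dictates whether the extra negative root sits to the left or to the right of $-(n-2)$ (and hence why the odd and even intervals are on opposite sides), and (b) the degenerate case $n=4$, which is what forces the even-$n$ interval to be closed on the right. Neither is a serious obstacle, so the main work is simply organizing the two parity cases cleanly.
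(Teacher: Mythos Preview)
Your proof is correct and follows essentially the same approach as the paper: both evaluate the polynomial at $-(n-2)$ and at a point displaced by $\epsilon_o(n)$ or $\epsilon_e(n)$, check the signs, and invoke the Intermediate Value Theorem, with the $n=4$ boundary case handled identically. The only difference is presentational: the paper rescales via $x=(n-2)y$ and works with $f(y)=y^{n-1}+y^{n-2}+y/(n-2)^{n-2}$, whereas you factor out the root at $0$ and exploit the form $g(x)=x^{n-3}(x+(n-2))+1$, which makes the key identity $g(-(n-2)+\epsilon)=\epsilon\,(-(n-2)+\epsilon)^{n-3}+1$ immediate and the sign analysis a bit more transparent.
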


\begin{proof}
To simplify some calculations, make the change of variables $x = (n-2)y$, and consider the polynomial

\[ 
    f(y) = \frac{1}{(n-2)^{n-1}}\degpoly{CL_n}{(n-2)y} = y^{n-1}+y^{n-2}+\frac{y}{(n-2)^{n-2}}.
\]


\noindent We first consider when $n$ is odd.  We will evaluate $f(y)$ at two points that give values with opposite sign, and apply the Intermediate Value Theorem (IVT).  The first point is $y = -1$:

\[ 
f(-1) = (-1)^{n-1} + (-1)^{n-2} + \frac{-1}{(n-2)^{n-2}}  = \frac{-1}{(n-2)^{n-2}}  < 0.
\]


\noindent The next point is $y = -1 - 1/(n-2)^{n-2}$:

\begin{align*}
    f\left(-1 - \frac{1}{(n-2)^{n-2}}\right) = & \; \left(-1 - \frac{1}{(n-2)^{n-2}}\right)^{n-1} + \left(-1 - \frac{1}{(n-2)^{n-2}}\right)^{n-2} \\
    & + \frac{-1 - \frac{1}{(n-2)^{n-2}}}{(n-2)^{n-2}} \\
    = & \; \frac{1 + \frac{1}{(n-2)^{n-2}}}{(n-2)^{n-2}}\left[\left(1 + \frac{1}{(n-2)^{n-2}}\right)^{n-3}-1\right] \\
    > &  \; 0
\end{align*}

\noindent since $(1 + 1/(n-2)^{n-2})^{n-3} > 1$.  Thus by the IVT, $f(y)$ has a root in the interval $(-1-1/(n-2)^{n-2}, -1)$.  Through the change of variables $x = (n-2)y$, $\degpoly{CL_n}{x}$ has a root in the interval $(-(n-2)-1/(n-2)^{n-3}, -(n-2)) = (-(n-2)-\epsilon_o(n), -(n-2))$.

Similarly, suppose that $n$ is even.  We still have that $f(-1) = -1/(n-2)^{n-2} < 0$.  Let us evaluate $f(y)$ at another point, namely $y = -1+1/(n-2)(n-3)^{n-3}$:

\begin{align*}
    f(y) = & \; f\left(-1+\frac{1}{(n-2)(n-3)^{n-3}}\right) \\
    = & \; \left(-1+\frac{1}{(n-2)(n-3)^{n-3}}\right)^{n-1} + \left(-1+\frac{1}{(n-2)(n-3)^{n-3}}\right)^{n-2} \\
    & + \frac{-1+\frac{1}{(n-2)(n-3)^{n-3}}}{(n-2)^{n-2}} \\
    = & \; \frac{\left(1-\frac{1}{(n-2)(n-3)^{n-3}}\right)^{n-2}}{(n-2)(n-3)^{n-3}} - \frac{1-\frac{1}{(n-2)(n-3)^{n-3}}}{(n-2)^{n-2}}.
\end{align*}

\noindent This quantity is non-negative, as

\begin{alignat*}{2}
    & & f\left(-1+\frac{1}{(n-2)(n-3)^{n-3}}\right) & \geq 0 \\
    & \iff & \frac{\left(1-\frac{1}{(n-2)(n-3)^{n-3}}\right)^{n-2}}{(n-2)(n-3)^{n-3}} & \geq \frac{1-\frac{1}{(n-2)(n-3)^{n-3}}}{(n-2)^{n-2}} \\
    & \iff & 1 - \frac{1}{(n-2)(n-3)^{n-3}} & \geq \frac{n-3}{n-2} \\
    & \iff & (n-2)(n-3)^{n-3}-1 & \geq (n-3)^{n-2} \\
    & \iff & 1 & \leq (n-3)^{n-3},
\end{alignat*}

\noindent and this last inequality is indeed true since $n \geq 4$.  Furthermore, there is equality if and only if $n=4$.  Applying the IVT, we conclude that $f(y)$ has a root in the interval $(-1, -1+1/(n-2)(n-3)^{n-3}]$.  Thus $\degpoly{CL_n}{x}$ has a root in the interval $(-(n-2), -(n-2)+1/(n-3)^{n-3}] = (-(n-2), -(n-2)+\epsilon_e(n)]$.

\end{proof}

Since we have shown there is at least one negative root when $n$ is even, there in fact must be two negative roots by what we found above with the Rule of Signs.  Using the IVT, we can quickly find that this root is in the interval $[-1, 0)$.  Evaluating the polynomial $g(x) = \degpoly{CL_n}{x}/x$ (just removing the known root at $0$) at these endpoints, we find

\begin{align*}
    g(0) & = (0)^{n-2} + (n-2)(0)^{n-3} + 1 \\
    & = 1 \\
    & > 0,
\end{align*}

\noindent and

\begin{align*}
    g(-1) & = (-1)^{n-2} + (n-2)(-1)^{n-3} + 1 \\
    & = (-1)^{n-3}(-1 + n - 2) + 1 \\
    & = -(n-3) + 1 \\
    & \leq 0,
\end{align*}

\noindent as $n \geq 4$.  Therefore $\degpoly{CL_n}{x}$ has a root in $[-1, 0)$ when $n$ is even.  In fact, this last inequality is an equality if and only if $n = 4$, when $\degpoly{CL_4}{x} = x^3 + 2x^2 + x = x(x+1)^2$.  In this case there is a double root at $-1$, which is why the half-closed interval is needed in Proposition \ref{prop:leaf_complete_real}.

A complex number $z$ is a {\em limit of zeros} of the sequence of polynomials $P_{1},P_{2},\ldots$ if there is a sequence $z_{1},z_{2},\ldots$ of complex numbers such that $P_{n}(z_{n}) = 0$ and $\displaystyle{\lim_{n \rightarrow \infty}  z_{n} = z}$. Suppose polynomials $P_{1},P_{2},\ldots$ satisfy a fixed term recurrence 
\begin{eqnarray} 
P_{n+k}(x) & = & -\sum_{i=1}^{k}f_{i}(x)P_{n+k-i}(x), \label{BKW}
\end{eqnarray}
where the $f_{i}$'s are polynomials in $x$.
We can solve such a recurrence to derive an explicit formula of the type
\begin{eqnarray} 
P_{n}(x) & = & \sum_{i=1}^{k} \alpha_i(x)(\lambda_{i}(x))^{n} \label{BKWexplicitformula}
\end{eqnarray}
where the $\lambda_i$'s are the zeros of the characteristic equation of the recursive relation \eqref{BKW}.
Beraha, Kahane and Weiss proved a beautiful result concerning the limits of the zeros of such polynomials, which we state as follows.

\begin{theorem}[Beraha-Kahane-Weiss, \cite{bkw1975}]

Suppose $\{P_t(x) : t \in \mathbb{N}\}$ is a sequence of polynomials having the form

\[ P_{t}(x) = \sum_{j=1}^s \alpha_j(x)\lambda_j(x)^t \]

\noindent for some polynomials $\alpha_j$ and not identically zero analytic functions $\lambda_j$, satisfying the following non-degeneracy condition: there is no constant $\omega$, with $|\omega| = 1$, such that $\lambda_i = \omega\lambda_j$ for some $i \neq j$.  Then $z$ is a limit of zeros for $\{P_t(x)\}$ if and only if the $\alpha_j$'s, $\lambda_j$'s can be reordered such that at least one of the following holds:

\begin{flalign*}
& 1. \;\; |\lambda_1(z)| > |\lambda_j(z)|, \;\; 2 \leq j \leq s, \;\; \mbox{and} \;\; \alpha_1(z) = 0  \\
& 2. \;\; |\lambda_1(z)| = |\lambda_2(z)| = \cdots = |\lambda_l(z)| > |\lambda_j(z)|, \;\; l+1 \leq j \leq s, \;\; \mbox{for some} \;\; l \geq 2.
\end{flalign*}

\label{thm:bkw}
\end{theorem}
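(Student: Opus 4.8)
\emph{Overview of the plan.} I would prove both directions of the equivalence, working on a small disk $D$ about a candidate point $z$ on which all the $\alpha_j$, all the $\lambda_j$, and the finitely many quotients $\lambda_i/\lambda_j$ are analytic, and on which the non-constant quotients among the $|\lambda_i/\lambda_j|$ have only isolated critical points. After reindexing so that $|\lambda_1(z)| \geq |\lambda_2(z)| \geq \cdots \geq |\lambda_s(z)|$, the single recurring device is to divide by the dominant power and study $g_t(x) = P_t(x)/\lambda_1(x)^t = \sum_{j=1}^{s} \alpha_j(x)\mu_j(x)^t$, where $\mu_j = \lambda_j/\lambda_1$; on $D$ every term with $|\mu_j(z)| < 1$ tends to $0$ uniformly, so only the ``top block'' $j = 1,\dots,l$, with $l$ the number of indices satisfying $|\lambda_j(z)| = |\lambda_1(z)|$, is relevant. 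Note $l \geq 1$ always, and if condition~2 holds then $|\lambda_1(z)| > 0$, so the $\mu_j$ are legitimately defined near $z$.

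\emph{Necessity.} Suppose $z$ is a limit of zeros but condition~2 fails, so $l = 1$ and $\lambda_1$ strictly dominates on some disk $D$. Then $g_t \to \alpha_1$ uniformly on $D$. If moreover $\alpha_1(z) \neq 0$, then $|g_t| \geq |\alpha_1(z)|/2$ on a smaller disk for all large $t$, so $P_t$ has no zero there, contradicting that $z$ is a limit of zeros. Hence $\alpha_1(z) = 0$, i.e.\ condition~1 holds; so every limit of zeros satisfies condition~1 or condition~2.

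\emph{Sufficiency under condition~1.} Here $\lambda_1$ strictly dominates near $z$, $\alpha_1(z) = 0$, and (discarding a term that is identically zero if necessary) $\alpha_1 \not\equiv 0$. Choosing $D$ so that $\partial D$ avoids the zeros of $\alpha_1$, we have $g_t \to \alpha_1$ uniformly on $\overline D$, so Hurwitz's theorem furnishes, for all large $t$, a zero of $g_t$ — equivalently of $P_t$ — inside $D$; letting $D$ shrink shows $z$ is a limit of zeros.

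\emph{Sufficiency under condition~2: the main obstacle.} Now $|\lambda_1(z)| = \cdots = |\lambda_l(z)| > |\lambda_j(z)|$ with $l \geq 2$, and each $\mu_j$ for $2 \leq j \leq l$ is analytic with $|\mu_j(z)| = 1$; by the non-degeneracy hypothesis $\mu_j$ is not a unimodular constant, hence not constant at all, so $\log|\mu_j|$ is non-constant harmonic near $z$, its zero level curve passes through $z$, and along that curve $\arg\mu_j$ varies strictly monotonically by the Cauchy--Riemann equations. The goal is to produce, for infinitely many $t$, a point $x_t \to z$ with $\sum_{j=1}^{l} \alpha_j(x_t)\mu_j(x_t)^t = 0$. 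In the model case $l = 2$ (and generic $\alpha_1(z),\alpha_2(z) \neq 0$) one first fixes $|x_t - z|$ by requiring $x_t$ to lie on the level curve $|\mu_2| = |\alpha_1/\alpha_2|^{1/t}$, which converges to the curve $|\mu_2| = 1$ through $z$ and forces $|\mu_2(x_t)|^t$ to match the target modulus; then one chooses the position of $x_t$ along that curve so that $t\,\arg\mu_2(x_t)$ realizes the residue modulo $2\pi$ demanded by the equation $\alpha_1 + \alpha_2\mu_2^t = 0$, which is possible since $\arg\mu_2$ sweeps a fixed interval while $t \to \infty$. Finally Rouch\'e's theorem, comparing $\alpha_1 + \alpha_2\mu_2^t$ with the discarded lower-order terms on a small circle about $x_t$, upgrades this approximate solution to a genuine zero of $g_t$. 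For $l \geq 3$ the phases of $\mu_2^t,\dots,\mu_l^t$ must be controlled simultaneously; this is handled by a Kronecker--Weyl equidistribution argument for the joint orbit $(\mu_2(z)^t,\dots,\mu_l(z)^t)$ on the appropriate subtorus, combined with the freedom to move $x$ near $z$, while the exceptional values of $t$, the isolated critical points of the $|\mu_j|$, and the coincidences where some $\alpha_j(z) = 0$ are dodged or treated as minor subcases. I expect essentially all the difficulty of the theorem to reside in making this last step rigorous and uniform in $t$; the other three parts are short.
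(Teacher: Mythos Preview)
The paper does not prove this theorem; it is stated with a citation to Beraha, Kahane, and Weiss \cite{bkw1975} and used as a black box (together with its extension, Theorem~\ref{thm:bkw_ext}) to determine limits of degree roots for the families $CL_n$ and $H_n$. There is therefore no proof in the paper to compare your proposal against.

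That said, your outline is broadly the standard route to BKW. The necessity argument and the sufficiency under condition~1 via uniform convergence and Hurwitz are correct and essentially complete as written. For condition~2 you have identified the right ingredients: the non-degeneracy hypothesis forces each $\mu_j$ to be non-constant analytic with $|\mu_j(z)|=1$, so the open mapping theorem gives you local control over $\mu_j$ near $z$, and the zero is then manufactured by a Rouch\'e comparison after tuning modulus and argument. One caution: for $l\ge 3$ the Kronecker--Weyl step is not merely a matter of equidistribution of a fixed orbit $(\mu_2(z)^t,\dots,\mu_l(z)^t)$, since rational dependencies among the $\arg\mu_j(z)$ can confine that orbit to a proper subtorus on which the target value need not be attainable; the usual fix is to exploit the freedom to move $x$ off $z$ and the fact that the map $x\mapsto(\mu_2(x),\dots,\mu_l(x))$ is open, rather than relying on equidistribution at the single point $z$. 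You have gestured at this (``combined with the freedom to move $x$ near $z$''), but that is where the genuine work lies, and a rigorous write-up would need to make the interplay between the $t$-orbit and the $x$-variation explicit.
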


 We can address the observation of roots converging to the unit circle, from Figure \ref{fig:leaf_complete_circ_roots}, with the extension of the BKW (Beraha-Kahane-Weiss) Theorem:

\begin{theorem}[\cite{brown2020extension}]

Let $\{ P_n(x) \}$ be a sequence of analytic functions of the form
\be
\label{eqn:P_n}
P_n(x)= \sum_{i=1}^k \alpha_i(n; x)(\lambda_i(x))^n,
\ee
where the $\lambda_i$ are analytic and not identically zero, $\lambda_i(x) \neq \omega\lambda_j(x)$ for any $\omega\in\mathbb{C}$ of unit modulus, and $\alpha_i(n; x)$ have the form
\be
\label{eqn:alphas}
\alpha_i(n; x) = n^{d_i} p_{i, d_i}(x) + n^{d_i - 1} p_{i, d_{i-1}}(x) + \cdots + n p_{i, 1}(x)+ p_{i, 0}(x).
\ee
where $d_i$ is the degree of $\alpha_i(n; x)$, the coefficient functions $p_{i, j}$ are analytic, and $p_{i, d_i}$ are not identically zero. 

Then $z\in\mathbb{C}$ is a limit of zeros of the family $\{P_n(x)\}$ if the $\lambda_i$ can be reordered such that either of the following conditions hold.
\begin{enumerate}
\item $|\lambda_1(z)| > |\lambda_i(z)|$ for all $i \neq 1$ and $p_{1, d_1}(z)=0$.
\item for some $l \geq 2$, $|\lambda_{1}(z)| = |\lambda_{2}(z)| = \cdots = |\lambda_{l}(z)| > |\lambda_{j}(z)|$ for all $j > l$ and there exists at least one $i$ such that $1 \leq i \leq l$ and $p_{i, d_i}(z) \neq 0$. 
\end{enumerate}

\label{thm:bkw_ext}
\end{theorem}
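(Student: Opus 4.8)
The plan is to treat the two sufficient conditions by different tools: condition~1 by Hurwitz's theorem, condition~2 by a potential-theoretic argument.

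For condition~1 I would proceed as follows. If $\lambda_1(z)=0$ then $P_n(z)=0$ for all $n$ and $z$ is trivially a limit of zeros, so assume $\lambda_1(z)\neq 0$. On a small closed disk $\overline D$ about $z$ on which $\lambda_1$ is zero-free and $|\lambda_i/\lambda_1|\le\theta<1$ for all $i\neq 1$, set
\[
Q_n=\frac{P_n}{n^{d_1}\lambda_1^{n}}=\frac{\alpha_1(n;x)}{n^{d_1}}+\sum_{i\neq 1}\frac{\alpha_i(n;x)}{n^{d_1}}\left(\frac{\lambda_i(x)}{\lambda_1(x)}\right)^{n}.
\]
The first term tends to $p_{1,d_1}$ uniformly on $\overline D$, while each remaining summand is $O(n^{|d_i-d_1|}\theta^{n})\to 0$ uniformly, so $Q_n\to p_{1,d_1}$ uniformly on $\overline D$. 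Since $p_{1,d_1}$ is analytic, not identically zero, and vanishes at $z$ by hypothesis, Hurwitz's theorem produces, for all large $n$, a zero of $Q_n$ --- equivalently of $P_n$, since $n^{d_1}\lambda_1^{n}$ is nonvanishing on $\operatorname{int}\overline D$ --- at a point tending to $z$; hence $z$ is a limit of zeros.

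For condition~2 the approach is proof by contradiction. Write $\mu=|\lambda_1(z)|=\dots=|\lambda_l(z)|>|\lambda_j(z)|$ for $j>l$, $l\ge 2$; if $\mu=0$ then $P_n(z)=0$ and we are done, so take $\mu>0$. Assuming $z$ is not a limit of zeros, $P_n$ is zero-free on some closed disk $\overline D$ about $z$ for all $n\ge N$; shrinking $\overline D$ I may also assume every $\lambda_i$ ($i\le l$) is zero-free there and $|\lambda_j(x)|<\max_{i\le l}|\lambda_i(x)|$ on $\overline D$ for $j>l$. Put $u_n=\tfrac1n\log|P_n|$ --- harmonic on $D:=\operatorname{int}\overline D$ for $n\ge N$ --- and $\Phi=\max_{i\le l}\log|\lambda_i|$, a continuous subharmonic function on $D$. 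I would then show $\Phi$ is harmonic on $D$, which is the contradiction we want. Two ingredients: (i) from $|\alpha_i(n;x)|\le C_in^{d_i}$ on $\overline D$ and the domination of the $j>l$ terms, $|P_n(x)|\le(\text{poly in }n)\,e^{n\Phi(x)}$, so $u_n\le\Phi+o(1)$ uniformly; (ii) $u_n(x)\to\Phi(x)$ for every $x$ off the null set formed by the modulus-crossing curves $\bigcup_{i\neq j\le l}\{|\lambda_i|=|\lambda_j|\}$ and the discrete sets $\{p_{i,d_i}=0\}$, because there one co-dominant term $\alpha_{i_0}(n;x)\lambda_{i_0}(x)^{n}$ dominates the sum (its geometric factor beating the polynomial-in-$n$ coefficients) and has modulus $\sim n^{d_{i_0}}|p_{i_0,d_{i_0}}(x)|$. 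Then apply the Harnack/normal-family principle to the nonnegative harmonic functions $M-u_n$ ($M$ bounding $\Phi+1$ on $\overline D$): some subsequence of $\{u_n\}$ converges uniformly on compacta to $-\infty$ or to a harmonic function; the first contradicts (ii), and in the second the limit, being continuous and a.e.\ equal to $\Phi$, equals $\Phi$. So $\Phi$ is harmonic on $D$ --- impossible, since $\Phi-\log|\lambda_1|$ is then a nonnegative harmonic function vanishing at the interior point $z$, forcing (minimum principle) $\Phi\equiv\log|\lambda_1|$ and hence $\log|\lambda_1/\lambda_2|\equiv 0$, i.e.\ $\lambda_1/\lambda_2$ a unimodular constant, against the non-degeneracy hypothesis. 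Therefore $z$ is a limit of zeros.

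The hard part will be ingredient~(ii): arguing that the cancellations which can make $|P_n(x)|$ abnormally small are confined to a null set --- essentially the crossing curves $\{|\lambda_i|=|\lambda_j|\}$ together with the thin loci $\{p_{i,d_i}=0\}$ --- and then feeding this into the machinery that upgrades a.e.\ convergence of a bounded-above sequence of harmonic functions to harmonicity of the limit. (This route does not seem to need the extra hypothesis that some $p_{i,d_i}(z)\neq 0$; a more hands-on argument that does would track the curves $|\lambda_i(x)|=|\lambda_j(x)|$ near $z$, observing that along them the polynomial-in-$n$ mismatch $|\alpha_i/\alpha_j|$ is rebalanced by $|\lambda_i/\lambda_j|^{n}$ at distance $\Theta((\log n)/n)$ from $z$, where $\arg(\lambda_i/\lambda_j)^{n}$ winds $\Theta(\log n)$ times and therefore sweeps through the value that makes $P_n$ vanish.)
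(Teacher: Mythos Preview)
The paper does not prove this statement: Theorem~\ref{thm:bkw_ext} is quoted from \cite{brown2020extension} and used as a black box, with no argument given here. So there is no ``paper's own proof'' to compare your proposal against.

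That said, your outline is a recognisable and essentially sound strategy. The treatment of condition~1 via normalising by $n^{d_1}\lambda_1^n$ and applying Hurwitz is the standard move and is fine (your exponent $|d_i-d_1|$ is a harmless over-estimate; the point is that any polynomial in $n$ is killed by $\theta^n$). For condition~2, the potential-theoretic route through $u_n=\tfrac1n\log|P_n|$, subharmonicity of $\Phi=\max_{i\le l}\log|\lambda_i|$, and a normal-families/Harnack argument to force $\Phi$ harmonic is in the spirit of Sokal's proof of the original BKW theorem; the contradiction via the minimum principle applied to $\Phi-\log|\lambda_1|$ and then to $\log|\lambda_1/\lambda_2|$ is correct, the non-degeneracy hypothesis entering exactly where you use it. Your honest flagging of ingredient~(ii) is appropriate: the crossing loci $\{|\lambda_i|=|\lambda_j|\}$ are null because $\log|\lambda_i/\lambda_j|$ is a nonconstant harmonic function (constancy would again violate non-degeneracy), and the zero sets of the $p_{i,d_i}$ are discrete, so the a.e.\ convergence does go through. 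Your observation that this route appears not to use the hypothesis $p_{i,d_i}(z)\neq 0$ for some $i\le l$ is also correct; that clause is presumably an artefact of the more constructive argument in \cite{brown2020extension}, and your parenthetical sketch of such a hands-on approach (tracking where $|\lambda_i/\lambda_j|^n$ rebalances the polynomial-in-$n$ coefficients along the crossing curves) is a plausible reconstruction of why it would be needed there.
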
 

Let us examine the limits of the roots of $\degpoly{CL_n}{x}$, as $n \to \infty$.  Since there is always a root at $x=0$, we can just consider the polynomial

\[    g_{n-3}(x) = \frac{\degpoly{CL_n}{x}}{x} = x^{n-3}(x + n - 2) + 1. \]

\noindent With a substitution of $N=n-3$, $g_N(x) = x^N(x+N+1)+1$ is in the form to apply Theorem \ref{thm:bkw_ext} if we let $\lambda_1(x) = x$, $\lambda_2(x) = 1$, $\alpha_1(N ; x) = x + N + 1$, and $\alpha_2(N ; x) = 1$.  Furthermore, we have $p_{1, 1}(x) = 1$ as the coefficient polynomial on $N$ in $\alpha_1(N ; x)$, and $p_{2, 0}(x) = 1$ as the coefficient polynomial on $N$ in $\alpha_2(N ; x)$.  Since both $p_{1, 1}(x)$ and $p_{2, 0}(x)$ are nonzero, we can rule out using the first condition of Theorem \ref{thm:bkw_ext} to find the limits.  The second condition immediately gives that the limits of $g_N(x)$ are the points $z$ where $|\lambda_1(z)| = |\lambda_2(z)|$, or where $|z| = 1$, i.e. the unit circle.  Thus we derive:

\begin{proposition}
The limits of the roots of $\degpoly{CL_n}{x}$, as $n \to \infty$, contain $0$ and the unit circle $|z| = 1$. \qed
\end{proposition}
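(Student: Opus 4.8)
The plan is to peel off the trivial root at the origin and recognize what remains as a family to which Theorem~\ref{thm:bkw_ext} applies. Since $\degpoly{CL_n}{x} = x^{n-1} + (n-2)x^{n-2} + x$ vanishes at $x = 0$ for every $n \geq 2$, the point $0$ is automatically a limit of zeros, so it only remains to capture the unit circle. Dividing out the factor $x$ gives $g_{n-3}(x) = x^{n-3}(x + n - 2) + 1$, and the substitution $N = n-3$ yields the cleaner form $g_N(x) = x^N(x + N + 1) + 1$.

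Next I would match $g_N$ against \eqref{eqn:P_n}--\eqref{eqn:alphas} with $k = 2$, taking $\lambda_1(x) = x$, $\lambda_2(x) = 1$, $\alpha_1(N;x) = x + N + 1$, and $\alpha_2(N;x) = 1$. Then $\alpha_1$ has degree $d_1 = 1$ in $N$ with leading coefficient polynomial $p_{1,1}(x) = 1$ (and $p_{1,0}(x) = x + 1$), while $\alpha_2$ has degree $d_2 = 0$ with $p_{2,0}(x) = 1$. Before invoking the theorem I would check its structural hypotheses: the $\lambda_i$ are analytic and not identically zero, and there is no unit-modulus constant $\omega$ with $\lambda_1 = \omega\lambda_2$ identically (impossible, since $x$ is nonconstant while $\omega$ is a constant). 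Both are immediate.

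With the hypotheses verified, the conclusion follows by inspection: condition~1 of Theorem~\ref{thm:bkw_ext} cannot trigger, since it would require the leading coefficient polynomial of the dominant $\alpha_i$ to vanish at $z$, yet both $p_{1,1} \equiv 1$ and $p_{2,0} \equiv 1$ are nowhere zero; while condition~2 (with $l = 2$) asks exactly for $|\lambda_1(z)| = |\lambda_2(z)|$, i.e.\ $|z| = 1$, together with some $p_{i,d_i}(z) \neq 0$, which is again automatic. Hence every $z$ on the unit circle is a limit of zeros of $\{g_N\}$, and therefore of $\{\degpoly{CL_n}{x}\}$, together with $0$. I do not expect a real obstacle here; the only point needing care is reading off the $\alpha_i$, $\lambda_i$, and degrees $d_i$ correctly so that the hypotheses of Theorem~\ref{thm:bkw_ext} are checked cleanly, and noting that the theorem gives a sufficient condition, so the argument delivers precisely the stated containment (the large real roots near $-(n-2)$ escape to infinity and so contribute no finite limit, which is consistent with claiming only containment).
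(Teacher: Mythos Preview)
Your argument is correct and mirrors the paper's own proof essentially line for line: divide out the root at $0$, reindex by $N=n-3$, and apply Theorem~\ref{thm:bkw_ext} with $\lambda_1(x)=x$, $\lambda_2(x)=1$, $\alpha_1(N;x)=x+N+1$, $\alpha_2(N;x)=1$, noting that both leading coefficient polynomials $p_{1,1}$ and $p_{2,0}$ are identically $1$ so only condition~2 applies and yields $|z|=1$. If anything, you are slightly more explicit than the paper in verifying the non-degeneracy hypothesis $\lambda_1\neq\omega\lambda_2$ and in remarking why the large real roots do not contribute a finite limit.
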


We have now verified our observation that there are roots of $\degpoly{CL_n}{x}$ which approach the unit circle.  In fact, for $n \geq 5$, all the roots of $\degpoly{CL_n}{x}$ except for the real root located near $-(n-2)$  from Proposition \ref{prop:leaf_complete_real} (that is, within the interval $(-(n-2)-\epsilon_o(n), -(n-2))$ if $n$ is odd, or inside the interval $(-(n-2), -(n-2)+\epsilon_e(n))$ if $n$ is even) are contained within the unit circle.  This follows easily from Rouch\'{e}'s Theorem (see, for example, \cite{marden1949}), as $|x^{n-1}+x| \leq 2 < |(n-2)x^{n-2}| = n-2$ on $|z|=1$, and hence $\degpoly{CL_n}{x} = x^{n-1}+(n-2)x^{n-2}+x$ and $(n-2)x^{n-2}$ have the same number of roots inside the disk $|z|<1$, which is $n-2$.

\subsection{Anti-Regular Graphs}
\label{subsec:anti-reg}

At the other end of the spectrum, there are graphs of order $n$ having $n-1$ distinct degrees.  Indeed there cannot be more than $n-1$ distinct degrees: if there were $n$ distinct degrees, then each of $0, 1, \ldots, n-1$ would need to appear as the degree of exactly one vertex, and there would simultaneously be a vertex adjacent to all others (degree $n-1$) and a vertex not adjacent to any (degree $0$), a contradiction.
These graphs are called \emph{anti-regular} \cite{ali2020survey}, also known as \emph{quasi-perfect}, \emph{maximally non-regular}, \emph{degree anti-regular}, or \emph{half-complete} \cite{ali2020survey,fishburn1983packing,dimitrov2014total}. 

For a given $n \geq 2$, there are precisely two graphs (up to isomorphism) with $n-1$ distinct degrees (see \cite{dimitrov2014total}): first, the graph $H_n$, with degrees $1, 2,  \ldots, n-1$.  Every degree appears once in the degree sequence, except for $\lfloor n/2 \rfloor$, which appears twice.  $H_n$ can be formed by taking vertices $v_1,  \ldots, v_n$, and adding all edges of the form $\{ v_i, v_j \}$ such that $i+j \geq n+1$.  The other graph has degrees $0, 1,  \ldots, n-2$, and is the graph complement $H_n^c$ of $H_n$.  The degree which appears twice in the degree sequence in this case is $n-1-\lfloor n/2 \rfloor = \lfloor (n-1)/2 \rfloor$:


\noindent Thus we can easily write the degree polynomials for these graphs:

\[ \degpoly{H_n}{x}  = \sum_1^{n-1} x^i + x^{\lfloor n/2 \rfloor} = \frac{x(x^{n-1}-1)}{x-1} + x^{\lfloor n/2 \rfloor},\]

\noindent and

\[ \degpoly{H_n^c}{x} = x^{n-1}\degpoly{H_n}{1/x}  = \frac{x^{n-1}-1}{x-1} + x^{\lfloor (n-1)/2 \rfloor}.\]

\noindent These polynomials have no gaps in the powers of the terms with nonzero coefficients (that is, there are nonnegative integers $d$ and $k$ with $d \geq k$ such that the coefficient of $x^i$ is nonzero iff $d \geq i \geq k$), and have only a single term with coefficient greater than one.  See Figure \ref{fig:anti-regular} for some examples of anti-regular graphs and their degree polynomials.

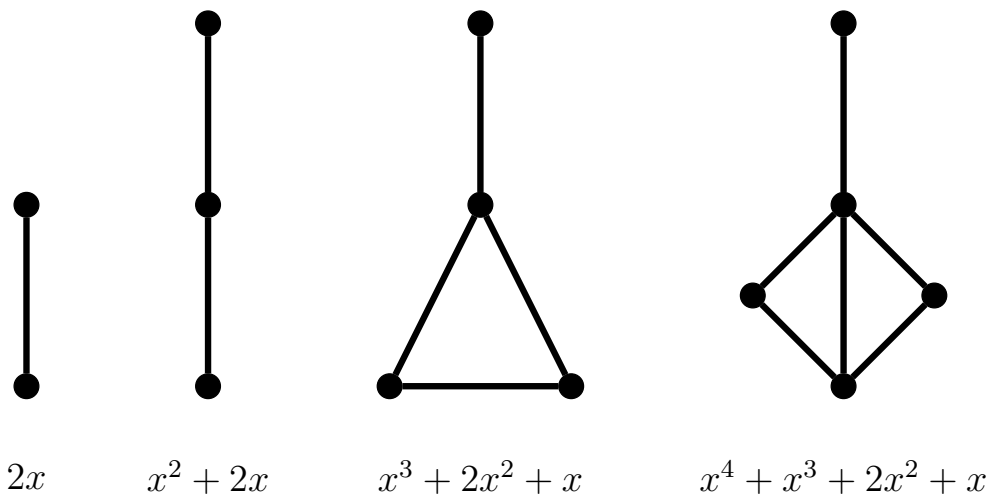
\begin{figure}[t!]
    \centering
    \resizebox{0.79\textwidth}{!}{\begin{tikzpicture}
        \centering
        \begin{pgfonlayer}{nodelayer}
            \node [style=node,scale=0.7] (21) at (0, 0) {};
            \node [style=node,scale=0.7] (22) at (0, 2) {};
            
            
            \node [style=node,scale=0.7] (31) at (2, 0) {};
            \node [style=node,scale=0.7] (32) at (2, 2) {};
            \node [style=node,scale=0.7] (33) at (2, 4) {};
            
            
            \node [style=node,scale=0.7] (41) at (4, 0) {};
            \node [style=node,scale=0.7] (42) at (6, 0) {};
            \node [style=node,scale=0.7] (43) at (5, 2) {};
            \node [style=node,scale=0.7] (44) at (5, 4) {};
            
            
            \node [style=node,scale=0.7] (51) at (8, 1) {};
            \node [style=node,scale=0.7] (52) at (9, 0) {};
            \node [style=node,scale=0.7] (53) at (10, 1) {};
            \node [style=node,scale=0.7] (54) at (9, 2) {};
            \node [style=node,scale=0.7] (55) at (9, 4) {};
            
            
            \node [style=none] (a) at (0, -1) {$2x$};
            \node [style=none] (b) at (2, -1) {$x^2+2x$};
            \node [style=none] (c) at (5, -1) {$x^3+2x^2+x$};
            \node [style=none] (d) at (9, -1) {$x^4+x^3+2x^2+x$};
        \end{pgfonlayer}
        \begin{pgfonlayer}{edgelayer}
            \draw [style=edge] (21) to (22);
            
            \draw [style=edge] (31) to (32);
            \draw [style=edge] (32) to (33);
            
            
            \draw [style=edge] (41) to (42);
            \draw [style=edge] (42) to (43);
            \draw [style=edge] (43) to (41);
            \draw [style=edge] (43) to (44);
            
            
            \draw [style=edge] (51) to (52);
            \draw [style=edge] (52) to (53);
            \draw [style=edge] (53) to (54);
            \draw [style=edge] (54) to (51);
            \draw [style=edge] (52) to (54);
            \draw [style=edge] (54) to (55);
            
        \end{pgfonlayer}
    \end{tikzpicture}}
    \caption{Examples of anti-regular graphs and their degree polynomials.  Left to right: $H_2, H_3, H_4, H_5$.}
    \label{fig:anti-regular}
\end{figure}


Figure \ref{fig:Hn_roots} shows the degree roots of the connected anti-regular graphs up to order $n = 50$.  Furthermore, we identify roots for even $n$ with red and those for odd $n$ with blue.  Some immediate observations are: when $n$ is even the (nonzero) roots appear to be on the unit circle, and not so for odd $n$.  However, the roots for odd $n$ surround the unit circle and possibly converge to it.  No root seems to exceed a modulus of $2$, which occurs for a real root.  

\begin{figure}
     \centering
     \begin{subfigure}
         \centering
         \includegraphics[width=0.68\textwidth]{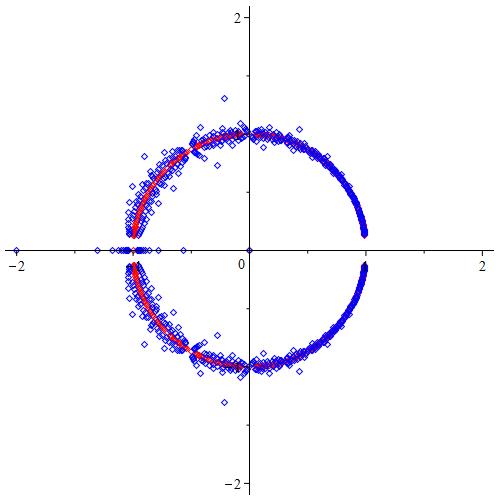}
         \label{fig:hn_rts_con}
     \end{subfigure}
        \caption{Degree roots for connected anti-regular graphs $H_n$, up to order $n=50$ (red roots correspond to even $n$, blue to odd $n$.}
        \label{fig:Hn_roots}
\end{figure}

As the degree roots of the disconnected anti-regular graphs are merely the inverses of the degree roots of the of their complements, we can restrict our attention to  the connected graphs $H_n$.  For a graph of order $n \geq 2$, the connected anti-regular graph $H_n$ has degree polynomial $\degpoly{H_n}{x} = \sum_{j=1}^{n-1} x^j + x^{\lfloor n/2 \rfloor}$.  As $\degpoly{H_n}{x} = x\frac{1-x^{n-1}}{1-x} + x^{\lfloor n/2 \rfloor},$ for all $x \neq 1$, the roots of $\degpoly{H_n}{x}$ are the solutions to the equation

\begin{equation} x - x^n + x^{\lfloor n/2 \rfloor} - x^{\lfloor n/2 \rfloor + 1} = 0 \label{eq:anti_reg_con} \end{equation}

\noindent except $x=1$.  We now examine the solutions to (\ref{eq:anti_reg_con}) via two cases on $n$.

\noindent \textbf{Case 1:} $n = 2k$, $k \geq 1$.  Here, (\ref{eq:anti_reg_con}) simplifies to

\[ x^{2k} + x^{k+1} - x^k - x = 0 \]

\noindent or

\[ x(x^{k}-1)(1+x^{k-1}) = 0. \]

\noindent Thus the roots of $\degpoly{H_{2k}}{x}$ are $x = 0$, the $k^{th}$ roots of unity (except for $1$ itself), and the $(k-1)^{th}$ roots of $-1$.

\noindent \textbf{Case 2:} $n = 2k+1$, $k \geq 1$.  In this case, (\ref{eq:anti_reg_con}) becomes

\begin{equation} x^{2k+1} + x^{k+1} - x^k - x = 0. \label{eq:anti_reg_con_case2} \end{equation}

\noindent These polynomials require numerical techniques to find their solutions.  However, we can deduce some information about them.  First of all, as $|x^{2k+1}| > |x^{k+1}| + |x^{k}| +x \geq |x^{k+1} - x^{k}  - x|$ on the circle $|x| \leq 2$, by Rouch\'{e}'s Theorem, the roots of $x^{2k+1} + x^{k+1} - x^k - x$ are in the disk $|x| = 2$.
Of course, there is a root at $0$.  Since there is exactly one sign change in the coefficients, there is exactly one positive solution to (\ref{eq:anti_reg_con_case2}) by the Rule of Signs.  Trivially, this solution is $x=1$, which is exactly the point we are excluding.  If we substitute $x \to -x$, we obtain

\[ - x^{2k+1}+(-1)^{k+1} x^{k+1}+(-1)^{k+1} x^k + x = 0. \]

\noindent Regardless if $k$ is even or odd, this equation has exactly one sign change and thus has exactly one positive root.  Therefore, $\degpoly{H_{2k+1}}{x}$, has exactly one negative root.  It follows that this root is in the interval $[-2,0)$. We can bound this negative root to the interval $[-2, -1/2)$ using the well-known Enestr\"{o}m-Kakeya Theorem:

\begin{theorem}[Enestr\"{o}m-Kakeya \cite{enestrom1920remarque,kakeya1912limits}]

Suppose $p(x) = a_n x^n + a_{n-1}x^{n-1} + \cdots + a_1 x + a_0$ where each $a_i$ is positive.  Let $q_k = a_{k-1}/a_k$, for $1 \leq k \leq n$.  Then any root $z$ of $p(x)$ satisfies

\[ \min_k \{q_k\} \leq |z| \leq \max_k \{q_k\}. \]  

\label{thm:kakeya}
\end{theorem}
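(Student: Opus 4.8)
The plan is to prove the upper bound $|z|\le M$, where $M=\max_k q_k$, and then deduce the lower bound by applying the upper bound to the reciprocal polynomial. For the upper bound I would first reduce to the classical special case of monotone coefficients. Put $M=\max_{1\le k\le n}q_k$ and substitute $x=My$, so that $p(My)=\sum_{k=0}^n b_k y^k$ with $b_k=a_kM^k>0$. Since $b_{k-1}/b_k=q_k/M\le 1$, we have $0<b_0\le b_1\le\cdots\le b_n$, and because $|x|=M|y|$ it suffices to show that every root $y$ of $\sum_k b_k y^k$ satisfies $|y|\le 1$.

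For that, the key step is to multiply by $(1-y)$ in order to telescope:
\[ (1-y)\sum_{k=0}^n b_k y^k = b_0 + \sum_{k=1}^n (b_k-b_{k-1})y^k - b_n y^{n+1}. \]
If $y$ is a root of $\sum_k b_ky^k$ with $|y|\ge 1$, the left side vanishes, so $b_ny^{n+1}=b_0+\sum_{k=1}^n(b_k-b_{k-1})y^k$; taking moduli, using $b_k-b_{k-1}\ge 0$ and $|y|^k\le|y|^n$ for $0\le k\le n$, and then telescoping the resulting sum, gives
\[ b_n|y|^{n+1}\le |y|^n\Bigl(b_0+\sum_{k=1}^n(b_k-b_{k-1})\Bigr)=b_n|y|^n, \]
and hence $|y|\le 1$ (division by $b_n|y|^n>0$ is legitimate since $b_0\ne 0$ means $y\ne 0$). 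Thus all roots of $\sum_k b_ky^k$ lie in $|y|\le 1$, i.e. all roots of $p$ lie in $|z|\le M$.

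For the lower bound, note that $a_0>0$ forces $p(0)\ne 0$, so I would consider the reciprocal polynomial $p^{\ast}(x)=x^np(1/x)=\sum_{j=0}^n a_{n-j}x^j$, whose roots are precisely the reciprocals of the roots of $p$. Its consecutive coefficient ratios are $a_{n-j+1}/a_{n-j}=1/q_{n-j+1}$ for $1\le j\le n$, so their maximum equals $1/\min_k q_k$. Applying the upper bound just established to $p^{\ast}$, every root $w$ of $p^{\ast}$ satisfies $|w|\le 1/\min_k q_k$, and writing $w=1/z$ for a root $z$ of $p$ yields $|z|\ge\min_k q_k$, as desired. I expect no genuine obstacle here: this is a classical argument, and the only point demanding care is the index bookkeeping in passing to $p^{\ast}$ and checking that its largest coefficient ratio is exactly $1/\min_k q_k$; everything else is the triangle inequality combined with the telescoping identity above.
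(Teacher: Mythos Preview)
Your argument is correct and is essentially the classical proof of the Enestr\"{o}m--Kakeya theorem: rescale to reduce to the case of nondecreasing positive coefficients, telescope via multiplication by $(1-y)$, apply the triangle inequality, and then obtain the lower bound by passing to the reciprocal polynomial. The bookkeeping is right; in particular your verification that the consecutive ratios of $p^{\ast}$ are exactly $1/q_{n-j+1}$ is accurate.

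However, there is nothing to compare against: in the paper this theorem is merely \emph{stated} as a classical result, with citations to Enestr\"{o}m and Kakeya, and no proof is given. It is used as a black box to bound the nonzero degree roots of the anti-regular graphs $H_n$ (showing they lie in the annulus $1/2\le |z|\le 2$). So your proposal supplies a valid proof of a result the paper quotes without proof.
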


As $\degpoly{H_n}{x} = x\sum_{j=0}^{n-2}x^j + x^{\lfloor n/2 \rfloor}$, the minimum ratio of consecutive coefficients is $1/2$.  By Theorem \ref{thm:kakeya}, every root of $\degpoly{H_n}{x}/x$ (i.e. the non-zero roots of $\degpoly{H_n}{x}$) have modulus in the interval $[1/2, 2]$.  This also holds for even $n$.  For the real root in the case of odd $n$ (which we were originally interested in), we can slightly improve this interval.  Evaluating the left hand side of (\ref{eq:anti_reg_con_case2}) at $x=-1/2$, we obtain

\begin{align*}
    x^{2k+1}+x^{k+1}-x^k-x & = \left( \frac{-1}{2} \right)^{2k+1} + \left( \frac{-1}{2} \right)^{k+1} - \left( \frac{-1}{2} \right)^{k} + \frac{1}{2} \\
    & = \frac{-1 + (-1)^{k+1}(2^k + 2^{k+1}) + 2^{2k}}{2^{2k+1}}.
\end{align*}

\noindent If $k$ is odd, then this quantity is clearly positive.  When $k$ is even, observe

\[  -1 + (-1)^{k+1}(2^k + 2^{k+1}) + 2^{2k} = -1 - (2^k + 2^{k+1}) + 2^{2k} > 0.\]

Thus in any case, the left hand side of (\ref{eq:anti_reg_con_case2}) is positive when $x = -1/2$.  Therefore the negative root of equation (\ref{eq:anti_reg_con_case2}) is actually in the interval $[-2, -1/2)$.










We can also study the limits of degree roots for $H_n$, as $n \to \infty$, using the BKW Theorem (Theorem~\ref{thm:bkw}) described earlier.
The first step  is writing our polynomials in the correct form.  Note that

\[ (1-x)\degpoly{H_n}{x} = x(1-x^{n-1}) + (1-x)x^{\lfloor n/2 \rfloor},\]

\noindent so we examine the limits of the roots of the latter. As before, we shall consider cases on the parity of $n$.\\

\noindent \textbf{Case 1:} $n = 2k$, $k \geq 1$.  Define the polynomial

\[  f_k(z) = (1-z)\degpoly{H_{2k}}{z} = - z^{2k} + (1-z) z^k + z.\]

\noindent Observe that $f_k(z)$ is readily in the form to apply the (original) BKW Theorem by setting $\alpha_1(z) = -1$, $\alpha_2(z) = 1-z$, $\alpha_3(z) = z$, and $\lambda_1(z) = z^2$, $\lambda_2(z) = z$, and $\lambda_3(z) = 1$.  We find that  the limits of the roots for $f_k(z)$, and also $\degpoly{H_{2k}}{z}$ are the unit circle centered at $z = 0$ and the origin.\\

\noindent \textbf{Case 2:} $n = 2k+1$, $k \geq 1$.  Similar to the first case, we can apply the BKW Theorem to the function

\[ g_k(z) = (1-z)\degpoly{H_{2k+1}}{z} = - z^{2k+1} + (1-z) z^k + z\]

\noindent by setting $\alpha_1(z) = -z$, $\alpha_2(z) = 1-z$, $\alpha_3(z) = z$, and $\lambda_1(z) = z^2$, $\lambda_2(z) = z$, $\lambda_3(z) = 1$.  The only difference between $g_k(z)$ and $f_k(z)$ is $\alpha_1$; thus the limits of the roots will be the same except for possibly those from the constraint $|\lambda_1(z)| > |\lambda_2(z)|$, $|\lambda_1(z)| > |\lambda_3(z)|$, and $\alpha_1(z) = 0$.  A quick verification determines that the limits of the roots of $g_k(z)$ are the same as $f_k(z)$, which overall gives that the limits of the roots of $\degpoly{H_n}{z}$, the unit circle centered at $z = 0$ and the origin. Summarizing:

\begin{proposition}
The degree polynomial of the anti-regular graph $H_{n}$ has exactly one negative root which lies in the interval $[-2,-1/2)$. The limits of roots of the degree polynomials $D(H_{n};x)$ are $0$ and the unit circle centered at $z = 0$. \qed
\end{proposition}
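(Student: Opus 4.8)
This proposition collects the facts worked out over the course of this subsection, so the plan is to split the proof into two parts\,---\,the count and location of the negative root, and the set of limits of zeros\,---\,and to treat each according to the parity of $n$.

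For the negative root, take $n$ odd, $n = 2k+1$, and set $p(x) = \degpoly{H_n}{x}/x$. Substituting $x \to -x$ in \eqref{eq:anti_reg_con_case2} produces a polynomial with exactly one sign change whether $k$ is even or odd, so by Descartes' Rule of Signs $\degpoly{H_n}{x}$ has exactly one negative root. To locate it, I would apply the Enestr\"{o}m--Kakeya Theorem (Theorem~\ref{thm:kakeya}) to $p$: its coefficients are all positive, with every consecutive ratio equal to $1$ except for the single ratio $1/2$ contributed by the doubled coefficient, so every nonzero root has modulus in $[1/2,2]$\,---\,in particular the negative root lies in $[-2,-1/2]$. (The bound $|z|\le 2$ also follows from Rouch\'{e}'s Theorem, comparing $x^{2k+1}$ with $x^{k+1}-x^{k}-x$ on $|x|=2$.) It remains to push the root off the right endpoint: substituting $x=-1/2$ into \eqref{eq:anti_reg_con_case2} and splitting into $k$ even and $k$ odd shows the value there is strictly positive, so $-1/2$ is not a root and the negative root lies in $[-2,-1/2)$, with $-2$ itself attained at $n=3$ where $\degpoly{H_3}{x}=x^2+2x$. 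For $n$ even, $n = 2k$, no estimate is needed: equation \eqref{eq:anti_reg_con} factors completely as $x(x^{k}-1)(1+x^{k-1})=0$, so every nonzero root is a $k$-th root of unity or a $(k-1)$-th root of $-1$ and hence has modulus $1$; the only real negative root it can have is $-1\in[-2,-1/2)$.

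For the limits of zeros I would pass to $(1-x)\degpoly{H_n}{x}=x(1-x^{n-1})+(1-x)x^{\lfloor n/2\rfloor}$, observing that multiplying by $1-x$ can only adjoin $1$ to the set of limits of zeros and $1$ already lies on the unit circle. For $n=2k$ this is $f_k(z)=-z^{2k}+(1-z)z^{k}+z=\alpha_1\lambda_1^{k}+\alpha_2\lambda_2^{k}+\alpha_3\lambda_3^{k}$ with $\lambda_1=z^2$, $\lambda_2=z$, $\lambda_3=1$ and $\alpha_1=-1$, $\alpha_2=1-z$, $\alpha_3=z$; the non-degeneracy hypothesis of the BKW Theorem (Theorem~\ref{thm:bkw}) holds since no two of $z^2,z,1$ differ by a unit-modulus constant multiple. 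Running the two BKW conditions, condition~(1) (some $\alpha_i$ vanishes while its $\lambda_i$ strictly dominates) is met only at $z=0$, via $\alpha_3=z$, where $|\lambda_3|=1>0=|\lambda_1|=|\lambda_2|$; condition~(2) forces the two largest of $|z|^2,|z|,1$ to coincide and strictly exceed the rest, which can happen only when $|z|=1$, where in fact all three coincide. Hence the limits of zeros of $f_k$, and therefore of $\degpoly{H_{2k}}{x}$, are exactly $0$ and the circle $|z|=1$. For $n=2k+1$ only $\alpha_1$ changes, to $-z$, and re-running the same check leaves the limit set $\{0\}\cup\{|z|=1\}$ unchanged; since both parity subsequences yield this set, so does $\{\degpoly{H_n}{x}\}$.

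The one genuinely delicate step is the localisation of the negative root to the \emph{half-open} interval $[-2,-1/2)$: the modulus bounds from Enestr\"{o}m--Kakeya (or Rouch\'{e}) only deliver the closed interval $[-2,-1/2]$, so excluding the right endpoint requires the explicit, parity-dependent evaluation of $\degpoly{H_n}{x}$ at $x=-1/2$. Everything else\,---\,the Descartes count, the factorisation in the even case, and the BKW bookkeeping\,---\,is routine once the non-degeneracy hypothesis has been verified.
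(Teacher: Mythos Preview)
Your proposal follows the paper's argument essentially step for step: Descartes' Rule plus Enestr\"om--Kakeya plus the evaluation at $x=-1/2$ for odd $n$, the explicit factorisation $x(x^{k}-1)(1+x^{k-1})$ for even $n$, and BKW applied to $(1-x)\degpoly{H_n}{x}$ with $\lambda_1=z^{2}$, $\lambda_2=z$, $\lambda_3=1$ for the limits of zeros. One small slip worth fixing: the doubled coefficient in $p(x)=\degpoly{H_n}{x}/x$ produces \emph{two} exceptional consecutive ratios, namely $1/2$ and $2$, not a single $1/2$; that second ratio is precisely what makes the Enestr\"om--Kakeya interval $[1/2,2]$ rather than $[1/2,1]$, so your stated reason does not quite justify your (correct) conclusion.
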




\section{Bounds on Degree Roots}
\label{sec:bounds}


We mentioned earlier that the set of all degree roots of graphs is equal to the set of all degree roots of multigraphs, which in turn is equal to the set of roots of non-negative integer coefficient polynomials. However, what if we restrict to graphs and multigraphs of order $n$? The question becomes much more interesting. In such a case, our point of comparison is with the roots of non-negative integer coefficient polynomials whose coefficient sum is $n$, as the sum of the coefficients in any degree polynomial of a multigraph of order $n$ is $n$. 

Even for some small $n$, there is a divergence, as the only degree root of a multigraph (or graph) of order $2$ is $0$, while $-1$ is a root of $1+x$. For order $3$, the only degree roots of (simple) graphs are $0$, $-1/2$ and $-2$, while the degree roots of multigraphs of order $3$ contain all roots of $1+2x^{\mu}$ for $\mu \geq 1$, and hence in their closure contain the unit circle centered at the origin. In fact, the same argument can be used to show that for all $n \geq 3$, there are infinitely many degree roots of multigraphs that are not degree roots of (simple) graphs. Take any graph $G$ of order $n \geq 3$ with an edge $e$. Then it is easy to see that if $G_{\mu}$ is the multigraph (of order $n$) formed from $G$ by replacing $e$ by a bundle of $\mu \geq 1$ parallel edges, then the degree polynomial of $G_{\mu}$ has the form of $a(x) \cdot x^{\mu} + b(x)$, where $a(x)$ and $b(x)$ are fixed nonzero polynomials. The BKW Theorem now shows that the limits of these degree roots contain the unit circle centered at the origin, while of course there are are only finitely many degree roots of graphs of order $n$ (as there are only finitely many such graphs). We shall see that for all even $n$, there are roots of non-negative integer coefficient polynomials whose coefficients sum to $n$ that are not the degree roots of multigraphs, so there is divergence there as well.




We will now consider bounding degree roots in terms of multigraph order, $n$.  
%
%
%
%
%
A well known bound on the zeros of polynomials \cite{Lagrange}, states that a real polynomial $\displaystyle{f(x) = \sum_{i=0}^{n} a_{i} x^i}$ has all its roots in the disk centred at the origin of radius $\displaystyle{R = \mbox{max} \left\{ \frac{|a_{n-1}| + \cdots + |a_{0}|}{a_{n}},1 \right\}}.$ The following bound immediately follows.

\begin{proposition}

Let $p(x) = a_{\Delta}x^{\Delta} + \cdots + a_{\delta}x^{\delta} \in \mathbb{Z}_{\geq 0}[x]$ where $\Delta > \delta$ and with $a_{\Delta}, a_{\delta} \geq 1$.  Suppose that $p(1) = a_{\Delta}+ \cdots + a_{\delta} = n$.  If $z$ is a root of $p(x)$, then

\[ |z| \leq \max\left\{ \frac{n-a_{\Delta}}{a_{\Delta}}, \frac{a_{\Delta}}{n-a_{\Delta}} \right\}. \]

\label{prop:circ_bound}
\end{proposition}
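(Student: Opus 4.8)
The plan is to invoke the Lagrange bound quoted immediately above the statement and then massage it into the stated symmetric form; the proposition is in fact an immediate corollary, as the phrase ``the following bound immediately follows'' signals. First I would apply that bound directly to $p(x)$ itself, which has degree $\Delta$ and positive leading coefficient $a_\Delta$: every root $z$ of $p$ then lies in the disk centred at the origin of radius $R = \max\left\{ \frac{|a_{\Delta-1}| + \cdots + |a_0|}{a_\Delta},\, 1\right\}$. The one bookkeeping point is to simplify the numerator: the coefficients of $p$ in all degrees strictly below $\delta$ are zero, and the coefficients in degrees $\delta, \delta+1, \ldots, \Delta-1$ sum to $p(1) - a_\Delta = n - a_\Delta$. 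Hence $R = \max\left\{ \frac{n-a_\Delta}{a_\Delta},\, 1 \right\}$, so $|z| \le \max\left\{\frac{n-a_\Delta}{a_\Delta}, 1\right\}$.

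Next I would check that the quantity $\frac{a_\Delta}{n-a_\Delta}$ occurring in the statement is well defined and positive. Since $a_\delta \ge 1$, $a_\Delta \ge 1$, and $\delta \ne \Delta$, the two coefficients $a_\Delta$ and $a_\delta$ are distinct nonzero terms of the sum $p(1) = n$, so $a_\Delta \le n - a_\delta \le n-1$, giving $n - a_\Delta \ge 1 > 0$.

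Finally I would close with the elementary inequality $\max\{A,1\} \le \max\{A, 1/A\}$, valid for every $A > 0$: if $A \ge 1$ then $1/A \le 1 \le A$ and both sides equal $A$; if $A < 1$ then the left side equals $1$ while the right side equals $1/A > 1$. Taking $A = \frac{n-a_\Delta}{a_\Delta}$, so that $1/A = \frac{a_\Delta}{n-a_\Delta}$, this yields $|z| \le R \le \max\left\{ \frac{n-a_\Delta}{a_\Delta},\, \frac{a_\Delta}{n-a_\Delta} \right\}$, which is exactly the claim (and no separate treatment of the root $z = 0$ is needed, since $0$ lies in every such disk).

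\textbf{Main obstacle.} Honestly there is none of substance: the real content is entirely in the cited Lagrange bound, and the proof amounts to the coefficient-sum computation $\sum_{i<\Delta} a_i = n - a_\Delta$ together with the trivial inequality $\max\{A,1\} \le \max\{A,1/A\}$. The only reason to prefer the symmetric right-hand side $\max\{(n-a_\Delta)/a_\Delta,\ a_\Delta/(n-a_\Delta)\}$ over the sharper $\max\{(n-a_\Delta)/a_\Delta, 1\}$ is presentational — it reflects that the set of nonzero degree roots of graphs of order $n$ is closed under $z \mapsto 1/z$.
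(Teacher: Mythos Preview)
Your proposal is correct and matches the paper's approach exactly: the paper presents the proposition as an immediate consequence of the cited Lagrange bound, and you have filled in precisely the two small steps needed --- the coefficient-sum computation $\sum_{i<\Delta}a_i = n-a_\Delta$ and the observation $\max\{A,1\}\le\max\{A,1/A\}$ for $A>0$.
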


In Figure \ref{fig:deg_roots_2to10} we observe that the degree roots for graphs of order $n$ seemed to never exceed a modulus of $n-1$, and roots that had such a modulus were real.  

\begin{theorem}

If $z$ is nonzero degree root of a multigraph of order $n$, then 
\[ \frac{1}{n-1} \leq |z| \leq n-1.\] \hfill\qed

\label{cor:ord_n_bound}
\end{theorem}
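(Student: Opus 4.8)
My plan is to obtain the upper bound directly from Proposition~\ref{prop:circ_bound} and then get the lower bound essentially for free by exploiting the closure of the set of nonzero degree roots under inversion noted in the introduction.

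First I would dispose of the degenerate case: if the multigraph $M$ is regular then $\degpoly{M}{x} = nx^r$, whose only root is $0$, which the hypothesis excludes. So I may assume $\degpoly{M}{x} = a_\Delta x^\Delta + \cdots + a_\delta x^\delta$ with $\Delta > \delta$, $a_\Delta, a_\delta \ge 1$, and $\degpoly{M}{1} = n$. Proposition~\ref{prop:circ_bound} then gives, for any root $z$,
\[ |z| \le \max\left\{\frac{n - a_\Delta}{a_\Delta},\ \frac{a_\Delta}{n - a_\Delta}\right\}. \]
The key elementary observation is that $a_\Delta \ge 1$ while $n - a_\Delta = \sum_{k=\delta}^{\Delta - 1} a_k \ge a_\delta \ge 1$, so $a_\Delta$ and $n - a_\Delta$ are two positive integers summing to $n$. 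For any positive integers $a + b = n$ one has $\max\{a/b,\ b/a\} \le n-1$ (with equality exactly when $\{a,b\} = \{1, n-1\}$, which incidentally matches the observation that the roots of modulus $n-1$ seen in Figure~\ref{fig:deg_roots_2to10} are real), and hence $|z| \le n-1$.

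For the lower bound I would recall that if the largest edge multiplicity of $M$ is $\mu$ then its complement satisfies $\degpoly{\overline{M}}{x} = x^{(n-1)\mu}\,\degpoly{M}{1/x}$, so whenever $z \ne 0$ is a degree root of $M$, the reciprocal $1/z$ is a degree root of $\overline{M}$, which is again a multigraph of order $n$. Applying the upper bound just established to $1/z$ yields $|1/z| \le n-1$, i.e. $|z| \ge 1/(n-1)$, completing the argument.

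I do not expect a genuine obstacle here: the entire content is the arithmetic fact that both ratios in Proposition~\ref{prop:circ_bound} are at most $n-1$ because the relevant coefficient sums lie between $1$ and $n-1$, together with the inversion-closure trick. The one point that needs a moment's care is verifying $n - a_\Delta \ge 1$, which is precisely where the hypothesis that $z$ is nonzero — forcing at least two distinct degrees, hence $a_\delta \ge 1$ — gets used.
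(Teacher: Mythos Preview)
Your argument is correct and follows the same route as the paper: the upper bound is read off from Proposition~\ref{prop:circ_bound} by noting that both $a_\Delta$ and $n-a_\Delta$ lie between $1$ and $n-1$, and the lower bound comes from the closure of nonzero degree roots under inversion via the multigraph complement. Your write-up simply makes explicit the regular case and the verification that $n-a_\Delta \ge 1$, which the paper leaves to the reader.
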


This modulus upper bound is in fact true, even for multigraphs of order $n$, and follows directly from Proposition~\ref{prop:circ_bound}, when $a_{\Delta} = 1$ or $n-1$ (the lower bound holds as the set of nonzero degree roots of multigraphs is closed under inverses).

Figure \ref{fig:deg_roots_2to10} seems to suggest that the only roots of modulus $n-1$ are real, and only appear when $n$ is odd.  The following propositions address these observations for $n \geq 4$, since all degree roots for $n=2$ or $n=3$ are already real.

\begin{proposition}

Let $G$ be a graph of order $n \geq 4$, and suppose $\degpoly{G}{x}$ has a degree root $z$, where $|z| = n-1$.  Then $\degpoly{G}{x}$ has the form $\degpoly{G}{x} = x^{\Delta}+(n-1)x^{\Delta-1}$, and in particular $z = -(n-1)$.

\label{prop:n-1_mod_root}
\end{proposition}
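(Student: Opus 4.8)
The plan is to exploit the extremal nature of the bound in Proposition~\ref{prop:circ_bound} together with the structure of degree polynomials. Write $\degpoly{G}{x} = a_\Delta x^\Delta + \cdots + a_\delta x^\delta$ with $a_\Delta, a_\delta \geq 1$ and $\sum_k a_k = n$. Proposition~\ref{prop:circ_bound} gives $|z| \leq \max\{(n-a_\Delta)/a_\Delta,\ a_\Delta/(n-a_\Delta)\}$, and for this to reach $n-1$ we need either $a_\Delta = 1$ (giving $(n-a_\Delta)/a_\Delta = n-1$) or $a_\Delta = n-1$ (giving $a_\Delta/(n-a_\Delta) = n-1$). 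So the first step is to split into these two cases and, in each, argue that equality forces all the intermediate coefficients to vanish, leaving a trinomial — or rather a binomial of the form $x^\Delta + (n-1)x^{\Delta-1}$.

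First I would handle the case $a_\Delta = 1$. Here $\degpoly{G}{x}/x^\delta$ is a monic polynomial of degree $\Delta - \delta$ whose remaining coefficients sum to $n-1$, and we are told it has a root of modulus exactly $n-1$. The Lagrange/Cauchy bound $R = \max\{|a_{n-1}| + \cdots + |a_0|,\ 1\}$ is attained only in a very rigid situation: if $|z| = n-1 > 1$ and $z^{\Delta-\delta} = -(c_{\Delta-\delta-1} z^{\Delta-\delta-1} + \cdots + c_0)$ with $c_i \geq 0$ summing to $n-1$, then taking moduli and using $|z| \geq 1$ forces $|z|^{\Delta-\delta} = |{\textstyle\sum} c_i z^i| \leq \sum c_i |z|^i \leq |z|^{\Delta-\delta-1}\sum c_i = (n-1)|z|^{\Delta-\delta-1}$, i.e. $|z| \leq n-1$ with equality throughout. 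Equality in $\sum c_i |z|^i \leq |z|^{\Delta-\delta-1}\sum c_i$ forces $c_i = 0$ for every $i < \Delta-\delta-1$, so only the $x^{\Delta-\delta-1}$ coefficient survives and it must equal $n-1$; equality in the triangle inequality $|\sum c_i z^i| \leq \sum c_i|z|^i$ (now a single term) is automatic, and we conclude $z^{\Delta-\delta} = -(n-1)z^{\Delta-\delta-1}$, hence $z = -(n-1)$ and $\degpoly{G}{x} = x^\Delta + (n-1)x^{\Delta-1}$ after absorbing the $x^\delta$ factor (noting $\delta = \Delta - 1$). The second case $a_\Delta = n-1$ I would dispatch by passing to the complement: $\degpoly{\overline{G}}{x} = x^{n-1}\degpoly{G}{1/x}$, so $1/z$ is a degree root of $\overline{G}$ of modulus $1/(n-1)$; but then... actually it is cleaner to note $\overline{G}$ has a vertex of degree... — alternatively, one reruns the same Lagrange-bound argument on $\degpoly{G}{x}/(a_\Delta x^\delta)$ after dividing through, where now the constant term $a_\delta/a_\Delta$ plays the leading role; equality still forces all intermediate terms to vanish and pins down the shape.

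The main obstacle I expect is the second case ($a_\Delta = n-1$): here $\degpoly{G}{x}/x^\delta$ is not monic, so the raw Lagrange bound must be applied after normalizing by $a_\Delta$, and one has to be careful that the "sum of the other coefficients" is then $(n - a_\Delta)/a_\Delta = 1/(n-1) < 1$, which makes the $\max\{\cdot,1\}$ in Lagrange's bound equal to $1$, not $n-1$. So the bound $|z|=n-1$ in this case does \emph{not} come from Lagrange at all — it must instead come from the \emph{lower} bound applied to $\overline{G}$, or equivalently from the Eneström–Kakeya-type reasoning on the reciprocal polynomial. The clean route is: if $a_\Delta = n-1$ then $a_\delta = 1$ and all other $a_k = 0$ (since they sum to $n$), forcing $\degpoly{G}{x} = (n-1)x^\Delta + x^\delta$, whose nonzero roots are the $(\Delta-\delta)$-th roots of $-1/(n-1)$, all of modulus $(n-1)^{-1/(\Delta-\delta)} \leq 1 < n-1$ for $n \geq 4$ — a contradiction unless this case simply cannot produce a root of modulus $n-1$. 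Wait: that shows the second case is vacuous, so really only $a_\Delta = 1$ occurs. I would therefore restructure the argument as: (1) the modulus bound $n-1$ forces $a_\Delta \in \{1, n-1\}$; (2) rule out $a_\Delta = n-1$ directly by the computation above (its roots have modulus $< 1$); (3) in the surviving case $a_\Delta = 1$, run the equality analysis of Lagrange's bound to conclude $\degpoly{G}{x} = x^\Delta + (n-1)x^{\Delta-1}$ and $z = -(n-1)$. The only genuinely delicate point is justifying that equality in the chain of inequalities kills \emph{all} lower-order coefficients — this needs $|z| > 1$, which holds since $n - 1 > 1$ for $n \geq 4$, so there is no boundary subtlety.
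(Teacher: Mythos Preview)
Your argument is correct, but it takes a different route from the paper. The paper invokes Cauchy's \emph{strict} bound $|z| < 1 + \max_{k\neq\Delta}\{a_k/a_\Delta\}$ directly: from $|z|=n-1$ one gets $n-2 < \max_{k\neq\Delta}\{a_k\}/a_\Delta$, and since the coefficients are nonnegative integers summing to $n$ this forces $a_\Delta=1$ and some single $a_k=n-1$ in one stroke, with no case split and no separate equality analysis; the exponent $k=\Delta-1$ then drops out from $(n-1)^{1/(\Delta-k)}=n-1$. Your approach instead leans on Proposition~\ref{prop:circ_bound} (the Lagrange bound), which only gives $a_\Delta\in\{1,n-1\}$, so you must dispose of $a_\Delta=n-1$ by a side computation and then run a triangle-inequality equality analysis to kill the intermediate coefficients. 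Both work; the paper's use of the strict Cauchy bound is shorter and avoids the detour, while your method has the minor virtue of being self-contained from Proposition~\ref{prop:circ_bound} without importing a second classical bound. One small point to tighten in your write-up: the conclusion ``$\delta=\Delta-1$'' is not an afterthought but is \emph{forced} by your equality step, since $c_0=a_\delta\geq 1$ yet you showed $c_i=0$ for $i<\Delta-\delta-1$; stating this explicitly closes the argument cleanly.
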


\begin{proof}
Let $\degpoly{G}{x} = a_{\Delta}x^{\Delta} + \cdots + a_{\delta}x^{\delta}$ ($a_{\Delta} > 0$) be the degree polynomial of $G$ which has the root $z$ of modulus $n-1$ (so $G$ is not regular and hence $\delta < \Delta$).  Consider the theorem of Cauchy (see, for example, \cite{marden1949}) which states that all roots of $\degpoly{G}{x}$ have modulus strictly less than

\[ 1 + \max_{k \neq \Delta} \left\{ \left|\frac{a_k}{a_{\Delta}}\right| \right\} = 1 + \frac{\max_{k \neq \Delta} \{ a_k \}}{a_{\Delta}}. \]

\noindent Since this applies to the root $z$ with modulus $n-1$, we must have

\[ n-2 < \frac{\max_{k \neq \Delta} \{ a_k \}}{a_{\Delta}}. \]

\noindent As the non-negative coefficients sum to $n$, this inequality is only satisfied when $a_{\Delta}=1$ and $\max_{k \neq \Delta} \{ a_k \} = n-1$, so $a_{\Delta}=1$, $a_k = n-1$ for some $k < \Delta$, and all other coefficients are zero.  This gives $\degpoly{G}{x}$ the form $\degpoly{G}{x} = x^{\Delta}+(n-1)x^k$. As the modulus of the root $z$ is $n-1$, it follows that $\Delta-k=1$, and thus $k = \Delta-1$.  Thus $\degpoly{G}{x} = x^{\Delta}+(n-1)x^{\Delta-1}$, and we also conclude that $z=-(n-1)$.
\end{proof}

Polynomials of the form $x^{\Delta}+(n-1)x^{\Delta-1}$ are not necessarily degree polynomials for all values of $n$ and $\Delta$.  The next proposition tells us precisely when they are.

\begin{proposition}

A polynomial of the form $x^{\Delta}+(n-1)x^{\Delta-1}$ with $n \geq 4$, $\Delta \leq n-1$, is a degree polynomial of a (multi)graph of order $n$ if and only if $n$ is odd and $\Delta$ is even.

\label{prop:2term_when_is_degpoly}
\end{proposition}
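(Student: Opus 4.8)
The plan is to characterize exactly when a polynomial $x^{\Delta}+(n-1)x^{\Delta-1}$ arises as a degree polynomial by invoking the realizability criterion stated in the introduction: a polynomial $p(x) \in \mathbb{Z}_{\geq 0}[x]$ is the degree polynomial of a multigraph of order $n$ if and only if (i) $p'(1)$ is even and (ii) $\deg(p) \leq p'(1)/2$. Here $p(x) = x^{\Delta}+(n-1)x^{\Delta-1}$, so first I would compute $p'(1) = \Delta + (n-1)(\Delta-1) = n\Delta - n + 1 = n(\Delta-1) + 1$. Condition (i) then says $n(\Delta-1)+1$ is even, i.e.\ $n(\Delta-1)$ is odd, which happens precisely when both $n$ and $\Delta - 1$ are odd, i.e.\ $n$ is odd and $\Delta$ is even. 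This already pins down the parity requirement; it remains to check that condition (ii) is automatically satisfied under these hypotheses so that no further restriction appears.

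Next I would verify condition (ii), namely $\Delta \leq p'(1)/2 = (n(\Delta-1)+1)/2$, under the standing assumptions $n \geq 4$ and $\Delta \leq n-1$. Rearranging, this is equivalent to $2\Delta \leq n\Delta - n + 1$, i.e.\ $n - 1 \leq (n-2)\Delta$, i.e.\ $\Delta \geq (n-1)/(n-2)$. Since $n \geq 4$ gives $(n-1)/(n-2) \leq 3/2 < 2$, and since $\Delta \geq 2$ always holds here (if $\Delta \leq 1$ then $x^{\Delta}+(n-1)x^{\Delta-1}$ would have degree at most $1$ and could not be a degree polynomial of a nonregular graph of order $n \geq 4$ with these coefficients — one should note $\Delta \geq 1$ trivially and rule out $\Delta = 1$, which forces $p(x) = x + (n-1)$ with $p'(1) = 1$ odd, failing (i) anyway), condition (ii) holds automatically. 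So the only binding constraint is the parity condition from (i).

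Putting the two parts together: $x^{\Delta}+(n-1)x^{\Delta-1}$ with $n \geq 4$ and $\Delta \leq n-1$ is a degree polynomial of a multigraph of order $n$ if and only if $p'(1)$ is even, which as computed above is equivalent to $n$ odd and $\Delta$ even. I would close by remarking that the realizability criterion genuinely produces a \emph{multigraph} in general, consistent with the statement; one could additionally note (using the reduction from multigraphs to simple graphs described in the introduction, or by exhibiting $H_n$-type constructions) that a simple graph realization also exists, but the proposition as stated only claims the multigraph version and so this is optional.

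The main obstacle, such as it is, will be handling the small/degenerate cases of $\Delta$ cleanly — in particular making sure $\Delta = 1$ (and the implicit assumption $\Delta \geq 1$, i.e.\ that $x^{\Delta-1}$ is an honest monomial) does not sneak through as a spurious exception, and double-checking the direction of the inequality in condition (ii) since it is the one place an off-by-one could hide. But the arithmetic is elementary once the Hakimi-type criterion is in hand, so I do not expect any serious difficulty.
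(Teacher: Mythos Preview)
Your proof is correct and takes a genuinely different route from the paper's. For the forward direction both arguments are essentially the handshake-lemma parity computation, but for the backward direction the paper gives an explicit construction of a \emph{simple} graph $F_{n,\Delta}$ (a circulant on $n-1$ vertices with a matching swapped out to attach an extra vertex), whereas you invoke the Hakimi criterion quoted in the introduction and simply verify that condition~(ii) is automatic once $\Delta\geq 2$. Your approach is cleaner and entirely avoids constructing anything, at the cost of only producing a multigraph; the paper's construction is more work but yields the stronger conclusion that a simple graph of order $n$ realizes the polynomial. One small caveat on your closing remark: the multigraph-to-simple-graph reduction from the introduction multiplies the order by the edge multiplicities, so it would not by itself give a simple graph \emph{of order $n$}; if you want the simple-graph upgrade you really do need an explicit construction along the lines the paper gives.
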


\begin{proof}
$(\implies)$ We first prove the forward direction.  Since $x^{\Delta}+(n-1)x^{\Delta-1}$ is the degree polynomial of a multigraph, we know that the sum of the degrees must be even.  Hence, $\Delta+(n-1)(\Delta-1)$ is even, implying $\Delta$ and $(n-1)(\Delta-1)$ are of the same parity.  It is not hard to see that this implies $n$ is odd and $\Delta$ is even.

\noindent $(\impliedby)$ Suppose $n$ is odd, so $n=2k+1$ for some $k \geq 2$, and also that $\Delta$ is even, so $\Delta = 2d$ for some $1 \leq d \leq k$. We construct a graph $F_{n,\Delta}$ of order $n$ as follows. Take vertices $v_{0},v_{1},\ldots,v_{n-1}$. Arrange the vertices $v_{0},v_{1},\ldots,v_{n-2}$ cyclically and for $i \in \{0,1,,\ldots,n-2\}$, join each $v_{i}$ to $\Delta/2 - 1 = d-1$ on either side of it in the arrangement and as well as to $v_{i+k}$ (arithmetic modulo $n-1$). The subgraph induced on $v_{0},v_{1},\ldots,v_{n-2}$ is clearly $(\Delta-1)$-regular. Now take any matching $M$ of cardinality $d = \Delta/2$ in this subgraph, join $v_{n-1}$ to the ends of the edges in $M$, and then remove the edges in $M$. Call the resulting simple graph $F_{n,\Delta}$. Then $F_{n,\Delta}$ has one vertex of degree $\Delta$ and the remaining $n-1$ vertices of degree $\Delta-1$, and hence has degree polynomial $x^{\Delta}+(n-1)x^{\Delta-1}$.

\end{proof}

Proposition \ref{prop:2term_when_is_degpoly} also confirms what we stated earlier, namely that for even $n \geq 4$, there are roots of non-negative integer coefficient polynomials with sum $n$ that are not the degree roots of multigraphs of order $n$ (in particular, $-(n-1)$, which is a root of $x+(n-1)$).  Some examples of graphs that have a degree root at $-(n-1)$, when $n$ is odd, can be constructed by removing a perfect matching from $K_{2k}$ (where $k = (n-1)/2$), and adding a universal vertex. Another set of examples (which are disconnected) is afforded by taking the disjoint union of $P_3$ with $(n-3)/2$ copies of $P_2$.

We can also provide a bound for purely imaginary degree roots.

\begin{proposition}

If $z$ is a nonzero purely imaginary degree root, then

\[ \frac{1}{\sqrt{n-1}} \leq |z| \leq \sqrt{n-1}. \]


\label{prop:imag_root_bound}
\end{proposition}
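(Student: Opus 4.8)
The plan is to split the degree polynomial into its parts of even and odd degree and to exploit that the square of a nonzero purely imaginary number is a negative real. Write $z = it$ with $t \in \mathbb{R}$, $t \neq 0$, and set
\[ \degpoly{G}{x} = E(x^2) + x\cdot O(x^2), \]
where $E(y) = \sum_j a_{2j}\,y^j$ and $O(y) = \sum_j a_{2j+1}\,y^j$ both lie in $\mathbb{Z}_{\geq 0}[y]$. Evaluating at $x = it$ gives $\degpoly{G}{it} = E(-t^2) + i\big(t\cdot O(-t^2)\big)$, and since $-t^2$ is real, both $E(-t^2)$ and $t\cdot O(-t^2)$ are real; hence $\degpoly{G}{it} = 0$ forces $E(-t^2) = 0$ and $t\cdot O(-t^2) = 0$, the latter giving $O(-t^2) = 0$ because $t \neq 0$. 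So $-t^2$ is a root of each of $E$ and $O$.

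Next I would observe that at least one of $E, O$ is a nonzero polynomial (otherwise $\degpoly{G}{x} \equiv 0$), and that any nonzero one of them is in fact non-constant, since it vanishes at $-t^2 \neq 0$. Fix such a polynomial and call it $P(y) = \sum_i b_i y^i$, with leading coefficient $b_\ell \geq 1$. Since $E(1) + O(1) = \degpoly{G}{1} = n$ with both terms non-negative, $P(1) \leq n$, so the non-leading coefficients of $P$ sum to $P(1) - b_\ell \leq n - 1$. The Lagrange bound recalled just before Proposition~\ref{prop:circ_bound} then shows every root $w$ of $P$ satisfies $|w| \leq \max\{(n-1)/b_\ell,\,1\} \leq n - 1$; applying this to $w = -t^2$ yields $t^2 \leq n-1$, i.e.\ $|z| = |t| \leq \sqrt{n-1}$.

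The lower bound comes for free from closure under inverses, exactly as in the proof of Theorem~\ref{cor:ord_n_bound}: if $z = it$ is a degree root of a multigraph of order $n$, then so is $1/z = i(-1/t)$, which is again purely imaginary, so applying the upper bound just proved to $1/z$ gives $1/|z| \leq \sqrt{n-1}$, hence $|z| \geq 1/\sqrt{n-1}$. I do not anticipate a genuine obstacle: the content is the even/odd splitting, after which everything reduces to the polynomial-root bound already in hand. The only care needed is in the degenerate cases — all degrees even (then $O \equiv 0$ and one uses $P = E$) or all degrees odd (then $E \equiv 0$ and one uses $P = O$) — and in noting that the surviving polynomial is non-constant, so that the root bound is not vacuous.
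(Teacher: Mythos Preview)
Your proof is correct and follows essentially the same strategy as the paper's: split the degree polynomial into its even- and odd-degree parts so that a purely imaginary root $z=it$ forces each part to vanish at $-t^2$, then apply the Lagrange-type bound (the paper invokes Proposition~\ref{prop:circ_bound}) to the resulting polynomial in $\mathbb{Z}_{\geq 0}[y]$, with the lower bound coming from closure under inverses. Your organization is slightly cleaner in that you avoid the paper's case split on the parity of $\Delta$ by simply selecting whichever of $E$, $O$ is nonzero.
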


\begin{proof}
Since $z$ lies on the imaginary axis, we have $z=ir$ for some $r \in \mathbb{R}$.  Let us write $\degpoly{G}{x} = a_{\Delta}x^{\Delta}+ \cdots + a_{\delta}x^{\delta}$.  $\degpoly{G}{ir} = 0$ can be written as

\[ i^{\Delta}\left( a_{\Delta}r^{\Delta}-a_{\Delta-2}r^{\Delta-2}+\cdots \right)+i^{\Delta-1}\left( a_{\Delta-1}r^{\Delta-1}-a_{\Delta-3}r^{\Delta-3}+\cdots \right) = 0, \]

\noindent or simply $i^{\Delta}A+i^{\Delta-1}B=0$.  Therefore, both $A$ and $B$ must be equal to zero.  Since $a_{\Delta} \geq 1$, there must be another coefficient ($a_k$, for some $k$) in $A$ that is nonzero.  Let us now consider two cases on the parity of $\Delta$.

\noindent \textbf{Case 1:} $\Delta=2k$.  In this case, we may write $A=0$ as

\[ a_{2k}r^{2k}-a_{2k-2}r^{2k-2}+\cdots=0. \]

\noindent Setting $s=r^2$ we have

\[ a_{2k}s^k-a_{2k-2}s^{k-1}+\cdots = 0, \]

\noindent and thus $-s$ is a root of $f(x) = a_{2k}x^k+a_{2k-2}x^{k-1}+\cdots$.  Since $f(x)$ has only non-negative integer coefficients, we apply Proposition \ref{prop:circ_bound}: $f(1) \leq n$ and $1 \leq a_{2k} \leq n-1$, so

\begin{align*}
    |s| & = |-s| \\
    & \leq \max \left\{ \frac{n-a_{2k}}{a_{2k}}, \frac{a_{2k}}{n-a_{2k}} \right\} \\
    & \leq n-1.
\end{align*}

\noindent Therefore, $|z| = |r| \leq \sqrt{n-1}$.

\noindent \textbf{Case 2:} $\Delta = 2k+1$.  In this case, we may write $A=0$ as

\[ a_{2k+1}r^{2k+1}-a_{2k-1}r^{2k-1}+\cdots = 0. \]

\noindent Dividing by $r$ and again setting $s=r^2$ we have

\[ a_{2k+1}s^k - a_{2k-1}s^{k-1}+\cdots = 0, \]

\noindent so $-s$ is a root of $g(x) = a_{2k+1}x^{k} + a_{2k-1}x^{k-1}+\cdots$.  As above, we can apply Proposition \ref{prop:circ_bound} to obtain $|z| \leq \sqrt{n-1}$.

Therefore, in any case, we have $|z| \leq \sqrt{n-1}$.  The lower bound follows since $1/z$ is a degree root as well.  
\end{proof}

\section{Open Problems}
\label{sec:concluding}

We conclude this paper by discussing a modulus bound conjecture that generalizes both Theorem \ref{cor:ord_n_bound} and Proposition \ref{prop:imag_root_bound}, and gives a tighter modulus bound in all other places.  We have seen that the disk $|z|  = n-1$ contains the degree roots of all multigraphs of order $n$, with a root on the boundary when $n$ is odd. However, Figure~\ref{fig:deg_roots_2to10} suggests, at least for graphs, that a smaller region might suffice. We propose the following.

\begin{conjecture}
    If $z$ is a nonzero degree root of a graph of order $n$ with argument $\arg (z) \in (-\pi, \pi]$, then

    \[ |z| \leq (n-1)^{\left|\frac{\arg (z)}{\pi}\right|}. \]
\end{conjecture}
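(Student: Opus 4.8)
The plan is to prove the bound by establishing a pointwise coefficient inequality and then invoking a classical zero-location theorem in a sector-sensitive form. Write $\degpoly{G}{x} = \sum_{k=\delta}^{\Delta} a_k x^k$ with $a_\Delta \geq 1$ and $\sum_k a_k = n$, and suppose $z \neq 0$ is a degree root with $z = r e^{i\theta}$, $\theta \in (-\pi,\pi]$. Because the set of nonzero degree roots of graphs of order $n$ is closed under inversion (via complementation, $\degpoly{\overline{G}}{x} = x^{n-1}\degpoly{G}{1/x}$) and the conjectured region is itself closed under $z \mapsto 1/z$ (since $\bigl|1/z\bigr| = (n-1)^{-|\arg z/\pi|}$ and $|\arg(1/z)| = |\arg z|$ for $\theta \in (-\pi,\pi]$, with the boundary case $\theta = \pi$ handled separately), it suffices to prove the bound under the assumption $|z| \geq 1$, i.e. to show $r \leq (n-1)^{|\theta|/\pi}$. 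Equivalently, taking logarithms, one wants $\log r \leq \frac{|\theta|}{\pi}\log(n-1)$, which links the modulus of the root to how far its argument is from the positive real axis.

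The key step is to extract, from $\degpoly{G}{z} = 0$, an inequality of the form $r^{\Delta - k} \leq n - 1$ for the \emph{gap} $\Delta - k$ between the top term and the next nonzero term, together with a lower bound on that gap forced by the argument of $z$. From $a_\Delta z^\Delta = -\sum_{k < \Delta} a_k z^k$ we get $a_\Delta r^\Delta \leq \sum_{k<\Delta} a_k r^k \leq r^{\Delta-1}\sum_{k<\Delta} a_k = (n - a_\Delta) r^{\Delta - 1}$ when $r \geq 1$, giving $r \leq (n - a_\Delta)/a_\Delta \leq n - 1$ — this recovers Theorem~\ref{cor:ord_n_bound} but not the sector refinement. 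To get the refinement one must argue that if the argument $\theta$ is small then the top several powers $z^\Delta, z^{\Delta-1}, \ldots$ all point in nearly the same direction, so cancellation among them is impossible and the effective gap must be large; more precisely, I expect to show that the nonzero-term exponents straddling $\Delta$ cannot all lie within an arc of angular width less than roughly $\pi$ as seen through multiplication by $z$, which forces $\Delta - \delta \geq \pi/|\theta|$ (or a similar bound on the relevant gap), and then combine $r^{\Delta - \delta} \leq $ (ratio of coefficient sums) $\leq n-1$ with $\Delta - \delta \geq \pi/|\theta|$ to obtain $r \leq (n-1)^{|\theta|/\pi}$. A cleaner route may be through the Eneström–Kakeya / Cauchy-type bounds applied after a rotation: substitute $x = e^{i\theta} t$ so that the hypothetical root sits on the positive real axis, and use that a polynomial with the resulting coefficient pattern whose coefficients have bounded argument-spread has real-positive roots only in a controlled interval; the quantitative version of such a theorem (e.g. results bounding roots of polynomials with coefficients in a sector, in the spirit of the Eneström–Kakeya theorem and its generalizations) should yield exactly the exponent $|\theta|/\pi$.

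The main obstacle I anticipate is making the ``the top terms can't cancel when $\theta$ is small'' heuristic into a rigorous inequality that produces precisely the exponent $|\theta/\pi|$ rather than some weaker constant; the exponent must be sharp because Proposition~\ref{prop:n-1_mod_root} shows the bound is attained (with equality $r = n-1$, $\theta = \pi$) exactly by the trinomial-free binomials $x^\Delta + (n-1)x^{\Delta-1}$, and the $H_n$ and $CL_n$ families show roots genuinely filling out regions near the unit circle where $|\theta|/\pi$ governs the modulus. A secondary difficulty is that degree polynomials can have arbitrary gap structure (only the constraints $p'(1)$ even and $\deg p \leq p'(1)/2$), so the argument must handle the case of several well-separated nonzero terms rather than just a binomial or trinomial; I would first prove the binomial and trinomial cases by hand to calibrate the constants, then handle the general case by showing that the extremal configurations are always the sparse ones. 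If a fully general argument proves elusive, a reasonable fallback is to prove the conjecture for the subclass of degree polynomials with no internal gaps (which includes the anti-regular graphs $H_n$, $H_n^c$) and for all binomials and trinomials, noting where the general case would need a sharper sector bound.
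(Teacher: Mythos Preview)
The statement you are attempting to prove is labelled a \emph{Conjecture} in the paper, and the paper does not prove it. The authors offer only supporting evidence: computational verification for graphs of order $n \leq 9$ and trees of order $n \leq 18$, the trivial cases (regular graphs, graphs with two degrees, roots inside the unit circle), agreement with Theorem~\ref{cor:ord_n_bound} at $\arg(z)=\pi$ and Proposition~\ref{prop:imag_root_bound} at $\arg(z)=\pm\pi/2$, and the observation that the star $K_{1,n-1}$ produces a root exactly on the proposed boundary curve. There is therefore no proof in the paper to compare your attempt against; what you have written is a research plan toward an open problem, and you yourself flag the central step as ``the main obstacle I anticipate'' and provide a fallback to special subclasses.

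Beyond that, your proposed mechanism contains a concrete error. You intend to combine two inequalities, $r^{\Delta-\delta}\leq n-1$ and $\Delta-\delta\geq \pi/|\theta|$, to conclude $r\leq (n-1)^{|\theta|/\pi}$. The first inequality is false already for the graphs $CL_n$ studied in Section~\ref{subsec:complete_with_leaf}. Take $n=5$: the degree polynomial is $x^{4}+3x^{3}+x$, so $\Delta-\delta=3$ and $n-1=4$. By Proposition~\ref{prop:leaf_complete_real} this polynomial has a real root just past $-(n-2)=-3$; numerically $r\approx 3.1$, so $r^{\Delta-\delta}\approx 30$, which is far larger than $n-1=4$. (The conjecture itself is not violated here, since for this negative real root $\theta=\pi$ and the bound is $(n-1)^{1}=4\geq r$; it is your route to the bound that breaks.) Your derivation of $r^{\Delta-\delta}\leq n-1$ is also not what your own triangle-inequality step actually yields: from $a_\Delta r^\Delta\leq\sum_{k<\Delta}a_k r^{k}$ with $r\geq 1$ you correctly extract only $r\leq (n-a_\Delta)/a_\Delta$, which is Theorem~\ref{cor:ord_n_bound}, not a bound on $r^{\Delta-\delta}$. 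The alternate route you sketch (rotate by $e^{i\theta}$ and appeal to a sector version of Enestr\"om--Kakeya) is not a proof either: no such sharp sector theorem is cited or proved, and the exponent $|\theta|/\pi$ would have to fall out exactly. As the paper presents it, the statement remains open.
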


This bound agrees with Theorem \ref{cor:ord_n_bound} for real roots ($\arg (z) = \pi$) and Proposition \ref{prop:imag_root_bound} for imaginary roots ($\arg (z) = \pm\pi/2$).  Figure \ref{fig:mod_bound_conjecture} shows the curve $|z| = (n-1)^{\left|\frac{\arg (z)}{\pi}\right|}$ along with the circular bound $|z| = n-1$ on plots of degree roots for some small values of $n$.

As evidence in favour of this conjecture, we have verified it for graphs of order $n \leq 9$ and trees of order $n \leq 18$.  Trivially, the degree roots for regular graphs are within this bound.  Similarly, the degree roots for graphs with only two degrees are within this bound.  Since the curve $|z| = (n-1)^{\left|\frac{\arg (z)}{\pi}\right|}$, for $n \geq 3$, lies exterior to the unit circle at all points except at $z=1$ (where it meets the unit circle), our conjecture also holds for degree roots inside the unit circle.  Thus our conjecture holds for the degree roots of the graphs $CL_n$.  

We also observe that the star $K_{1,n-1}$ has degree polynomial $x^{n-1} + (n-1)x$, with a degree root at $0$ and the rest having modulus $(n-1)^{1/(n-2)}$. One of these roots is $z = (n-1)^{1/(n-2)}e^{i\pi/(n-2)}$ with modulus $(n-1)^{\left|\frac{\arg (z)}{\pi}\right|}$, landing up right on the boundary of the curve.

\begin{figure}[ht!]
    \centering
    \begin{subfigure}
         \centering
         \includegraphics[width=0.38\textwidth]{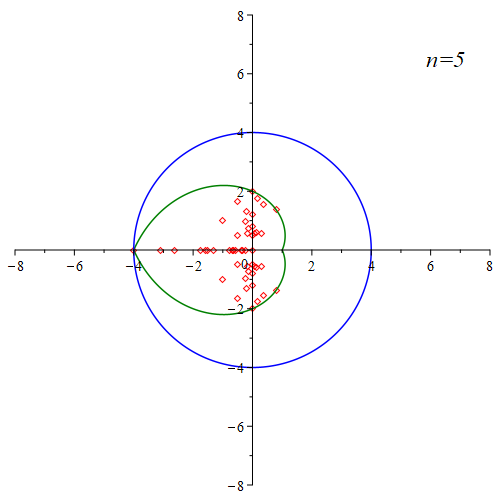}
     \end{subfigure}
     \begin{subfigure}
         \centering
         \includegraphics[width=0.38\textwidth]{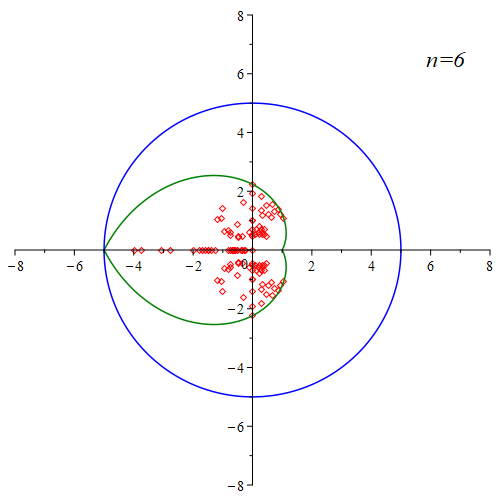}
     \end{subfigure}
     \begin{subfigure}
         \centering
         \includegraphics[width=0.38\textwidth]{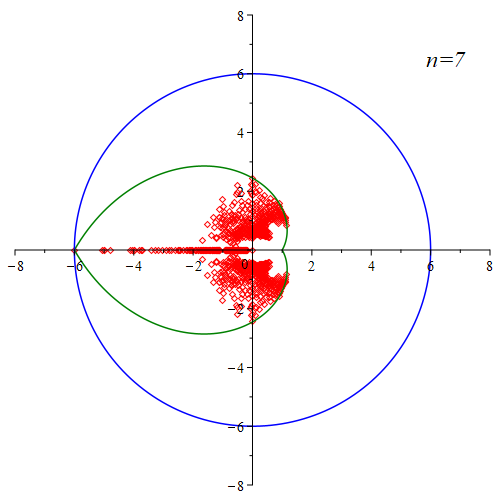}
     \end{subfigure}
    \caption{Degree roots of graphs for some small values of $n$ (red).  The blue curves show $|z| = n-1$, while the green curves are $|z| = (n-1)^{\left|\frac{\arg (z)}{\pi}\right|}$.}
    \label{fig:mod_bound_conjecture}
\end{figure}

Finally, we can investigate the degree roots of other families of graphs, both in terms of the conjecture, as well as in their own right. An examination of the degree roots of trees and certain multipartite graphs has been undertaken in \cite{georgethesis}. Future work on degree roots includes as well determining the set of rational degree roots of graphs and multigraphs, as well as bounds for the real and imaginary parts of degree roots of multigraphs of order $n$.




\section*{Acknowledgements}
J.I. Brown acknowledges research support from the Natural Sciences and Engineering Research Council of Canada (NSERC), grant RGPIN 2018-05227. 


\bibliographystyle{plain}

\end{document}